\newtheorem{theorem}{Theorem}
\newtheorem{lemma}{Lemma}[section]
\newtheorem{corollary}{Corollary}
\newtheorem{remark}{Remark}[section]
\newcommand{\ee}{\mathrm{e}}
\newcommand{\reals}{\mathbb{R}}
\newcommand{\ind}{\mathbf{1}}
\newcommand{\e}{\mathbb{E}}
\newcommand{\p}{\mathbb{P}}
\newcommand{\drift}{\mathtt c}
\newcommand{\wq}{w^{(q)}}
\newcommand{\hq}{h^{(q)}}
\newcommand{\zq}{z^{(q)}}
\begin{document}

\title[]{On the time spent in the red by a refracted L\'evy risk process}

\author[J.-F. Renaud]{Jean-Fran\c{c}ois Renaud}
\address{D\'epartement de math\'ematiques, Universit\'e du Qu\'ebec \`a Montr\'eal (UQAM), 201 av.\ Pr\'esident-Kennedy, Montr\'eal (Qu\'ebec) H2X 3Y7, Canada}
\email{renaud.jf@uqam.ca}

\date{\today}

\keywords{Spectrally negative L\'{e}vy processes, refraction, occupation times, bankruptcy, Parisian ruin.}

\begin{abstract}
In this paper, we introduce an insurance ruin model with adaptive premium rate, thereafter refered to as restructuring/refraction, in which classical ruin and bankruptcy are distinguished. In this model, the premium rate is increased as soon as the wealth process falls into the \textit{red zone} and is brought back to its regular level when the process recovers. The analysis is mainly focused on the time a refracted L\'evy risk process spends in the red zone (analogous to the duration of the negative surplus). Building on results from \cite{kyprianouloeffen2010} and \cite{loeffenetal2012}, we identify the distribution of various functionals related to occupation times of refracted spectrally negative L\'evy processes. For example, these results are used to compute the probability of bankruptcy and the probability of Parisian ruin in this model with restructuring.
\end{abstract}

\maketitle

\section{Introduction}

In classical actuarial ruin theory, the time of default is assumed to occur if and when the surplus process falls below a certain threshold level for the first time. Without loss of generality, which is due to the spatial homogeneity of most surplus processes, this threshold level has commonly been assumed to be the \textit{artificial} level $0$. For solvency purposes, it is more appropriate to view this threshold level as the insurer's solvency capital requirement (SCR) set by the regulatory body. Therefore, new risk concepts and models have recently been introduced: Parisian ruin (see, e.g., \cite{czarnapalmowski2010}, \cite{loeffenetal2011}, \cite{landriaultetal2013}), random observations (see, e.g., \cite{albrecheretal2011a}) and Omega models (see, e.g., \cite{albrecheretal2011b}, \cite{gerberetal2012} and \cite{albrecherlautscham2013}).

Our goal is to introduce an insurance ruin model where default and bankruptcy are disentangled, and where also \textit{restructuring} is considered. Indeed, it seems very likely that when the company is in financial distress, namely when the surplus process falls below the critical level (SCR), some sort of restructuring will be undertaken. We propose a model with adaptive premium, i.e., where the premium are increased as soon as the surplus process is in the so-called \textit{red zone} and are brought back to their regular level when things get better; it is assumed that this critical situation is due to temporarily bad luck. Therefore, to do so, we will use a refracted L\'evy risk process as our surplus process, as studied by Kyprianou and Loeffen \cite{kyprianouloeffen2010}. 

In conclusion, we propose to study different occupation-time related definitions of bankruptcy/default in a L\'evy risk model with restructuring (a refracted L\'evy risk model). Despite the generality of using a L\'evy process as the underlying surplus process, our model is very tractable, thanks to the work of Kyprianou and Loeffen \cite{kyprianouloeffen2010} and the fluctuation identities they have obtained. This type of risk processes has been used traditionally to build models with a constant threshold dividend strategy; see the references in \cite{kyprianouloeffen2010} and \cite{kyprianouetal2012a}. Recently, in \cite{kyprianouetal2012}, a number of identities concerned with the distribution of occupation times until first passage times for a refracted L\'evy process were obtained. Instead of borrowing results directly from \cite{kyprianouetal2012}, we will use results and techniques from \cite{loeffenetal2012} to derive (more general) results expressed solely in terms of the scale functions of the underlying process.

As a consequence, our new identities for the distribution of occupation times of refracted L\'evy processes could form the theoretical basis to further develop a set of risk measures in this L\'evy risk model with restructuring/refraction.

\subsection{The model}

Let $U$ be the surplus process of interest. We choose the level $b > 0$ to be the threshold level representing the insurer's solvency capital requirement. As soon as $U$ goes below this critical level, restructuring will be undertaken in the sense that the premium for \textit{large} claims will be (temporarily) increased. When $U$ recovers, that is when $U$ goes above $b$ again, then things come back as they were initially. In other words, we consider the level $b$ to be the critical level and the interval $(-\infty,b)$ the \textit{red zone}; when the surplus process $U$ falls below $b$, a restructuring of the business is undertaken and materializes itself by an increase in the \textit{drift} of the process.

Intuitively, we are interested in the following dynamic:
$$
\mathrm{d}U_t = \mathrm{d}Y_t + \alpha \ind_{\{U_t < b\}} \mathrm{d}t ,
$$
or, equivalently,
$$
U_t = Y_t + \alpha \int_0^t \ind_{\{U_s < b\}} \mathrm{d}s ,
$$
where $Y$ is the underlying (uncontrolled) risk process during \textit{standard} business periods. However, for ease of presentation, namely to make our paper closer in notation to \cite{kyprianouloeffen2010}, we will instead use the following equivalent point of view: we first set the dynamic of the surplus process in the \textit{red zone} and refract it when it goes above $b$. Mathematically, let $X$ be the risk process during periods of financial distress, and define $U$ as follows: for $\alpha < \e [X_1]$,
$$
\mathrm{d}U_t = \mathrm{d}X_t - \alpha \ind_{\{U_t > b\}} \mathrm{d}t .
$$

For example, if $X$ is of the form
$$
X_t = \drift t - S_t ,
$$
where $\drift>\alpha$ represents the premium rate and where the driftless subordinator $S = (S_t)_{t \geq 0}$ represents the aggregate claim payments, then $U$ has a drift value of $\drift$ in the \textit{red zone} (below $b$) and a drift value of $\drift-\alpha$ above $b$. This includes the Cram\'er-Lundberg risk process as a special case. However, in what follows, $X$ will be a general spectrally negative L\'evy risk process.

Our main result gives representations for the joint Laplace transforms of
$$
\left( \kappa_a^- , \int_0^{\kappa_a^-} \ind_{\{U_s < b\}} \mathrm{d}s \right) \quad \text{and} \quad \left( \kappa_c^+ , \int_0^{\kappa_c^+} \ind_{\{U_s < b\}} \mathrm{d}s \right) ,
$$
where $a \leq x,b \leq c$, and where $\kappa_a^-$ and $\kappa_c^+$ are first passage times, in terms of the so-called scale functions of the underlying L\'evy processes $X$ and $Y$. These quantities will then be used to study the probability of bankruptcy and the probability of Parisian ruin for $U$.

The rest of the paper is organized as follows. Next, we introduce spectrally negative L\'evy processess and refracted L\'evy processes, including useful identities involving scale functions. Section 3 presents the main result of the paper, while in Section 4 and Section 5 corollaries are derived and applied to the computations of the probability of bankruptcy and the probability of Parisian ruin respectively.

\section{Spectrally negative L\'evy processes}

On the filtered probability space $(\Omega, (\mathcal F_t)_{t\geq0},\mathbb P)$, let $X = (X_t)_{t \geq 0}$ be a spectrally negative L\'evy process (SNLP), that is a  process with stationary and independent increments and no positive jumps. Hereby we exclude the case that $X$ is the negative of a subordinator, i.e., we exclude the case of $X$ having decreasing paths. The law of $X$ such that $X_0 = x$ is denoted by $\p_x$ and the corresponding expectation by $\e_x$. We write $\p$ and $\e$ when $x=0$. As the L\'{e}vy process $X$ has no positive jumps, its Laplace transform exists: for $\lambda,t \geq 0$,
$$
\e \left[ \mathrm{e}^{\lambda X_t} \right] = \mathrm{e}^{t \psi(\lambda)} ,
$$
where
$$
\psi(\lambda) = \gamma \lambda + \frac{1}{2} \sigma^2 \lambda^2 + \int^{\infty}_0 \left( \mathrm{e}^{-\lambda z} - 1 + \lambda z \ind_{(0,1]}(z) \right) \Pi(\mathrm{d}z) ,
$$
for $\gamma \in \reals$ and $\sigma \geq 0$, and where $\Pi$ is a $\sigma$-finite measure on $(0,\infty)$ such that
$$
\int^{\infty}_0 (1 \wedge z^2) \Pi(\mathrm{d}z) < \infty .
$$
This measure is called the L\'{e}vy measure of $X$, while $(\gamma,\sigma,\Pi)$ is refered to as the L\'evy triplet of $X$. Note that for convenience we define the L\'evy measure in such a way that it is a measure on the positive half line instead of the negative half line. Further, note that $\e \left[ X_1 \right] = \psi'(0+)$.

The process $X$ has paths of bounded variation if and only if $\sigma=0$ and $\int^{1}_0 z \Pi(\mathrm{d}z)<\infty$. In that case we denote by
$\drift:=\gamma+\int^{1}_0 z \Pi(\mathrm{d}z) > 0$ the so-called drift of $X$ which can now be written as
$$
X_t = \drift t - S_t ,
$$
where $S = (S_t)_{t \geq 0}$ is a driftless subordinator (for example a Gamma process or a compound Poisson process with positive jumps). When $S$ is a compound Poisson process and $\e \left[ X_1 \right] = \psi'(0+) > 0$, we recover the classical Cram\'er-Lundberg risk process. Finally, if $\Pi(\mathrm{d}z) \equiv 0$, we recover the Brownian motion risk process, i.e., $X$ can then be written as
$$
X_t = \drift t + \sigma B_t ,
$$
since $\drift = \gamma$ and where $B = (B_t)_{t \geq 0}$ is a standard Brownian motion. In the actuarial risk theory literature, SNLPs have been called general L\'evy insurance risk processes.

\subsection{Scale functions and fluctuation identities}

For an arbitrary SNLP, the Laplace exponent $\psi$ is strictly convex and $\lim_{\lambda \to \infty} \psi(\lambda) = \infty$. Thus, there exists a function $\Phi \colon [0,\infty) \to [0,\infty)$ defined by $\Phi(q) = \sup \{ \lambda \geq 0 \mid \psi(\lambda) = q\}$ (its right-inverse) such that
$$
\psi ( \Phi(q) ) = q, \quad q \geq 0 .
$$
We have that $\Phi(q)=0$ if and only if $q=0$ and $\psi'(0+)\geq0$.

We now recall the definition of the $q$-scale function $W^{(q)}$. For $q \geq 0$, the $q$-scale function of the process $X$ is defined as the continuous function with Laplace transform
\begin{equation*}
\int_0^{\infty} \mathrm{e}^{- \lambda y} W^{(q)} (y) \mathrm{d}y = \frac{1}{\psi(\lambda) - q} , \quad \text{for $\lambda > \Phi(q)$.}
\end{equation*}
This function is unique, positive and strictly increasing for $x\geq0$ and is further continuous for $q\geq0$. We extend $W^{(q)}$ to the whole real line by setting $W^{(q)}(x)=0$ for $x<0$.  We write $W = W^{(0)}$ when $q=0$. The initial value of $W^{(q)}$ is known to be
\begin{equation*}
W^{(q)}(0)=
\begin{cases}
1/\drift & \text{when $\sigma=0$ and $\int_{0}^1 z \Pi(\mathrm{d}z) < \infty$},  \\
0 & \text{otherwise},
\end{cases}
\end{equation*}
where we used the following definition: $W^{(q)}(0) = \lim_{x \downarrow 0} W^{(q)}(x)$. We will also frequently use the following function
$$
Z^{(q)}(x) = 1 + q \int_0^x{W}^{(q)}(y)\mathrm dy, \quad x\in\mathbb R.
$$

Fix $\alpha > 0$ and define $Y = (Y_t)_{t \geq 0}$ by $Y_t = X_t - \alpha t$. In what follows, if $X$ has paths of bounded variation, then it is assumed that
\begin{equation}\label{E:delta}
0 < \alpha < \drift = \gamma + \int_{(0,1)} z \Pi(\mathrm{d}z) .
\end{equation}
This condition is very intuitive. Indeed, recall that when the process $X$ has paths of bounded variation, then we can write $X_t = \drift t - S_t$, where $S = (S_t)_{t \geq 0}$ is a driftless subordinator. Condition~\eqref{E:delta} says that we do not want to remove all the drift. Clearly, $Y$ is also a spectrally negative L\'evy process; in fact, it has the same Gaussian coefficient $\sigma$ and L\'evy measure $\Pi$ as $X$. Its Laplace exponent is given by $\lambda \mapsto \psi(\lambda) - \alpha \lambda$, with right-inverse
$$
\varphi (q) = \sup \left\lbrace \lambda \geq 0 \colon \psi(\lambda) - \alpha \lambda = q \right\rbrace .
$$
The law of $Y$ such that $Y_0 = y$ is denoted by $\mathbf{P}_y$ and the corresponding expectation by $\mathbf{E}_y$. For each $q \geq 0$, we will write $\mathbb{W}^{(q)}$ and $\mathbb{Z}^{(q)}$ for the scale functions associated with $Y$.

Now, for any $a,c \in \reals$, define the following stopping times
\begin{gather*}
\tau_a^- = \inf \{t > 0 \colon X_t < a \} \quad \text{and} \quad \tau_c^+ = \inf \{t > 0 \colon X_t > c \} ,\\
\nu_a^- = \inf \{t > 0 \colon Y_t < a \} \quad \text{and} \quad \nu_c^+ = \inf \{t > 0 \colon Y_t > c \} ,
\end{gather*}
with the convention $\inf\emptyset=\infty$.

It is well known that, if $a \leq x \leq c$, then the solution to the two-sided exit problem for $X$ is given by
\begin{equation}\label{E:exitabove}
\e_x \left[ \ee^{-q \tau_c^+} ; \tau_c^+ < \tau_a^- \right] = \frac{W^{(q)}(x-a)}{W^{(q)}(c-a)} ,
\end{equation}
\begin{equation}\label{E:exitbelow}
\e_x \left[ \ee^{-q \tau_a^-} ; \tau_a^- < \tau_c^+ \right] = Z^{(q)}(x-a) - \frac{Z^{(q)}(c-a)}{W^{(q)}(c-a)} W^{(q)}(x-a) ,
\end{equation}
where, for a random variable $Z$ and an event $A$, $\e [Z;A] := \e [Z \ind_A]$.
Finally, in general, the \textit{classical} probability of ruin is given by
\begin{equation}\label{E:classicalruinproba}
\p_x \left( \tau_0^- < \infty \right) = 1 - \left( \e \left[ X_1 \right] \vee 0 \right) W(x) ,
\end{equation}
where $\e \left[ X_1 \right] = \psi'(0+)$.

Of course, the same results hold for $Y$; for example,
\begin{equation*}
\mathbf{E}_x \left[ \ee^{-q \nu_c^+} ; \nu_c^+ < \nu_a^- \right] = \frac{\mathbb{W}^{(q)}(x-a)}{\mathbb{W}^{(q)}(c-a)} .
\end{equation*}

\subsection{Refracted L\'evy processes}

Fix $b > 0$ and consider the following stochastic differential equation:
\begin{equation}\label{E:dynamic}
\mathrm{d}U_t = \mathrm{d}X_t - \alpha \ind_{\{U_t > b\}} \mathrm{d}t , \quad t \geq 0 .
\end{equation}

\begin{theorem}[Kyprianou and Loeffen \cite{kyprianouloeffen2010}]
For a fixed $X_0=x \in \mathbb{R}$, there exists a unique strong solution $U = (U_t)_{t \geq 0}$ to Equation~\eqref{E:dynamic}. Moreover, $U$ is a strong Markov process.
\end{theorem}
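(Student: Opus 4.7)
The natural strategy is a pathwise construction by concatenation across the barrier $b$, followed by a uniqueness argument that exploits the spectral negativity of $X$, after which the strong Markov property falls out of the construction.

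\textbf{Construction.} Fix a sample path of $X$. The plan is to define recursively a sequence of stopping times $0 = T_0 \leq T_1 \leq T_2 \leq \dots$ and build $U$ by prescribing, on each interval $[T_n, T_{n+1}]$,
\[
U_t \;=\; U_{T_n} + (X_t - X_{T_n}) - \alpha \, \ind_{\{U_{T_n} > b\}}\,(t - T_n),
\]
i.e.\ $U$ evolves as $X$ on excursions below (or at) $b$ and as $Y = X - \alpha \cdot$ on excursions strictly above $b$; here $T_{n+1}$ is the first time after $T_n$ at which the evolving process crosses $b$ in the opposite direction. Because $X$ has no positive jumps, every upcrossing of $b$ by $U$ occurs continuously, hence $U_{T_n} = b$ at each switch from below to above, while downcrossings may occur either continuously or by a negative jump.

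\textbf{Non-explosion.} The delicate point is to show $T_n \uparrow \infty$ almost surely, i.e.\ that the switches do not accumulate in finite time. If $X$ has paths of bounded variation, the argument is clean: immediately after an upcrossing, $Y$ departs from $b$ with strictly positive drift $\drift - \alpha > 0$ by~\eqref{E:delta}, and can only come back down by a jump, so $T_{n+1} > T_n$ strictly; the symmetric argument applies below $b$, and a renewal-type estimate rules out accumulation. The hard case is unbounded variation, where $b$ is regular for both half-lines for $X$ and for $Y$, so the iterative definition above threatens to produce a cluster point of switching times at $T_n$. The way around this is to reinterpret the construction: one notes that a spectrally negative L\'evy process of unbounded variation spends Lebesgue-zero time at any given point, so $\int_0^t \ind_{\{U_s = b\}}\,ds = 0$ and the drift correction $\alpha \int_0^t \ind_{\{U_s > b\}} ds$ is insensitive to the infinitesimal chattering at $b$. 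Concretely one defines $U$ as the limit of a Picard iteration $U^{(n+1)}_t := X_t - \alpha \int_0^t \ind_{\{U^{(n)}_s > b\}}\,ds$, starting from $U^{(0)} = X$, and establishes convergence using monotonicity in the iterand together with the quasi-continuity of the local time of $X$ (and $Y$) at $b$. I expect this to be the main obstacle.

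\textbf{Uniqueness and strong Markov property.} For pathwise uniqueness, let $U$ and $\tilde U$ be two solutions driven by the same $X$ from the same starting point. Up to the first time at which either process crosses $b$, both satisfy $dV_t = dX_t$ with the same initial condition, hence $U = \tilde U$ on that initial interval; by the no-positive-jumps property, the first upcrossing of $b$ is simultaneous for the two and occurs continuously at the value $b$. The analogous identification holds on the next excursion above $b$, and an induction over the (common) sequence of switching times gives $U \equiv \tilde U$; a Tanaka-type bound on $(U - \tilde U)^\pm$ can be used as a backstop to absorb the unbounded variation case. Once strong existence and pathwise uniqueness are in hand, the strong Markov property is automatic: for any stopping time $T$ of the natural filtration of $U$, the post-$T$ process $(U_{T+t})_{t \geq 0}$ is, by construction, the unique solution of~\eqref{E:dynamic} started at $U_T$ and driven by $(X_{T+t} - X_T)_{t\geq 0}$, which by the strong Markov property of $X$ is independent of $\mathcal F_T$ and has the law of $X$ under $\p$.
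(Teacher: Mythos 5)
The paper offers no proof of this statement: it is imported verbatim from Kyprianou and Loeffen \cite{kyprianouloeffen2010}, so the comparison has to be with their original argument. Your pathwise concatenation in the bounded variation case does match theirs, and your instinct that the unbounded variation case is the hard part is correct, but the Picard-iteration step you propose there has a real gap. The recursion $U^{(n+1)}_t = X_t - \alpha \int_0^t \ind_{\{U^{(n)}_s > b\}}\,\mathrm ds$ starting from $U^{(0)} = X$ is \emph{antitone}, not monotone: one checks $U^{(1)} \le U^{(3)} \le \cdots \le U^{(4)} \le U^{(2)} \le U^{(0)}$, so the odd and even subsequences converge to limits $U^-\le U^+$ that satisfy the cross-coupled system $U^+ = X - \alpha\int_0^\cdot \ind_{\{U^-_s>b\}}\,\mathrm ds$, $U^- = X - \alpha\int_0^\cdot \ind_{\{U^+_s>b\}}\,\mathrm ds$, and nothing in monotonicity alone forces $U^+ = U^-$. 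You would still need a separate argument to close that gap, and you do not supply one. Kyprianou and Loeffen avoid this entirely by approximating $X$ with a sequence of bounded variation spectrally negative L\'evy processes, using the bounded variation construction for each approximant, and passing to the limit; the key technical lemma enabling the limit passage is not ``quasi-continuity of local time'' but the fact that the solution spends Lebesgue-null time at the refraction level $b$, which controls the indicator $\ind_{\{\cdot > b\}}$ almost everywhere.

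Your uniqueness argument also stalls at exactly the point you flag: the induction over switching times does not get off the ground in the unbounded variation case because $b$ is regular for both half-lines, and ``a Tanaka-type bound as a backstop'' is a placeholder, not a proof. The actual uniqueness argument is in fact more elementary than what you sketch and requires no local time machinery: if $U$ and $\tilde U$ both solve the SDE from the same start, their difference $\Delta_t = U_t - \tilde U_t$ is absolutely continuous with
$\Delta'_t = -\alpha\bigl(\ind_{\{U_t > b\}} - \ind_{\{\tilde U_t > b\}}\bigr)$,
and the sign of this derivative is opposite to the sign of $\Delta_t$ (if $U_t > \tilde U_t$ then $\ind_{\{U_t>b\}}\ge\ind_{\{\tilde U_t>b\}}$, and symmetrically), so $\tfrac{\mathrm d}{\mathrm dt}\Delta_t^2 \le 0$ and $\Delta\equiv 0$. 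Once pathwise uniqueness and strong existence are in hand, your derivation of the strong Markov property from the strong Markov property of $X$ is standard and correct.
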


We now present fluctuation identities for refracted processes. First, we define the following functions related to $U$ (and to the scale functions of the underlying SNLPs $X$ and $Y$): for $x,a \in \reals$ and $q \geq 0$, define
\begin{align*}
\wq (x;a) &= W^{(q)} (x-a) + \alpha \ind_{\{x \geq b\}} \int_b^x \mathbb{W}^{(q)}(x-y) W^{(q) \prime}(y-a) \mathrm{d}y ,\\
\zq (x;a) &= Z^{(q)} (x-a) + \alpha q \ind_{\{x \geq b\}} \int_b^x \mathbb{W}^{(q)}(x-y) W^{(q)}(y-a) \mathrm{d}y .
\end{align*}
Note that when $x < b$,
$$
\wq (x;a) = W^{(q)} (x-a) , \quad \zq (x;a) = Z^{(q)} (x-a) , \quad \hq (x) = \mathrm{e}^{\Phi(q)x}  .
$$
Most of the notation follows \cite{kyprianou2012}.

As we will now see, one could think of these functions as being the scale functions of the refracted process $U$. First, for any $a,c \in \reals$, define the following stopping times
$$
\kappa_a^- = \inf \{t > 0 \colon U_t < a \} \quad \text{and} \quad \kappa_c^+ = \inf \{t > 0 \colon U_t > c \} .
$$
The next result provides a solution to the two-sided exit problem for $U$. It is essentially a re-statement of Theorem 4 in \cite{kyprianouloeffen2010} (see also \cite{kyprianou2012}) and it generalizes the expressions in Equations~\eqref{E:exitabove} and~\eqref{E:exitbelow} corresponding to the case $\alpha=0$.
\begin{theorem}[Kyprianou and Loeffen \cite{kyprianouloeffen2010}]\label{T:twosidedforU}
For $q \geq 0$ and $a \leq x,b \leq c$ we have
$$
\e_x \left[ \mathrm{e}^{-q \kappa_c^+} ; \kappa_c^+ < \kappa_a^- \right] = \frac{\wq(x;a)}{\wq(c;a)} ,
$$
and
$$
\e_x \left[ \mathrm{e}^{-q \kappa_a^-} ; \kappa_a^- < \kappa_c^+ \right] = \zq(x;a) - \frac{\zq(c;a)}{\wq(c;a)} \wq(x;a) .
$$
\end{theorem}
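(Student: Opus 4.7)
The plan is to exploit the strong Markov property of $U$ together with the fact that $U$ coincides with $X$ on $(-\infty,b)$ and with the refracted process $Y$ on $(b,\infty)$, reducing the problem to the known two-sided exit identities \eqref{E:exitabove}--\eqref{E:exitbelow} for $X$ and $Y$. Since neither $X$ nor $Y$ has positive jumps, upcrossings of the level $b$ are continuous, while downcrossings of $b$ from above may overshoot via a negative jump.

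First, on the region $a\leq x\leq b$ the indicator in~\eqref{E:dynamic} vanishes, so $U$ agrees with $X$ until the hitting time of $b$. Applying the strong Markov property at $\tau_b^+$ (at which $U_{\tau_b^+}=b$) and identity~\eqref{E:exitabove} for $X$ gives
$$
\e_x\!\left[\ee^{-q\kappa_c^+};\kappa_c^+<\kappa_a^-\right]=\frac{W^{(q)}(x-a)}{W^{(q)}(b-a)}\,\e_b\!\left[\ee^{-q\kappa_c^+};\kappa_c^+<\kappa_a^-\right],
$$
which matches the claim on $[a,b]$ since $\wq(x;a)=W^{(q)}(x-a)$ there; it therefore remains to determine the expectation starting from any $x\in[b,c]$.

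Next, on the region $b\leq x\leq c$ the process $U$ coincides with $Y$ until the first downcrossing time $\nu_b^-$, at which $Y_{\nu_b^-}\leq b$. Applying the strong Markov property at $\nu_b^-\wedge\nu_c^+$ and inserting the formula from the previous step yields
$$
\e_x\!\left[\ee^{-q\kappa_c^+};\kappa_c^+<\kappa_a^-\right]=\mathbf{E}_x\!\left[\ee^{-q\nu_c^+};\nu_c^+<\nu_b^-\right]+\frac{\e_b[\ee^{-q\kappa_c^+};\kappa_c^+<\kappa_a^-]}{W^{(q)}(b-a)}\,\mathbf{E}_x\!\left[\ee^{-q\nu_b^-}W^{(q)}(Y_{\nu_b^-}-a);\nu_b^-<\nu_c^+\right],
$$
the convention $W^{(q)}\equiv0$ on $(-\infty,0)$ absorbing the contribution of $\{Y_{\nu_b^-}<a\}$. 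The first term equals $\qscaleY(x-b)/\qscaleY(c-b)$ by~\eqref{E:exitabove} applied to $Y$. For the second I would invoke the joint Gerber--Shiu identity for $(\nu_b^-,Y_{\nu_b^-})$ killed on exiting $[b,c]$ that is standard in the spectrally negative setting; plugging $g(y)=W^{(q)}(y-a)$ into that identity and performing an integration by parts (using $W^{(q)}(0-)=0$) should rearrange the integrand into
$$
\alpha\int_b^x \qscaleY(x-y)\,W^{(q)\prime}(y-a)\,\dd y,
$$
which is precisely the extra term in $\wq(x;a)$. Combining both contributions, setting $x=b$ to solve the resulting linear equation for $\e_b[\ee^{-q\kappa_c^+};\kappa_c^+<\kappa_a^-]$ and substituting back delivers the announced ratio $\wq(x;a)/\wq(c;a)$ on the whole of $[a,c]$.

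The second identity follows by the same scheme: in the first step one uses~\eqref{E:exitbelow} in place of~\eqref{E:exitabove}, and in the second step $Z^{(q)}(Y_{\nu_b^-}-a)$ replaces $W^{(q)}(Y_{\nu_b^-}-a)$, so that the analogous integration by parts produces the $\zq$-integral. The main obstacle is the third step, i.e., the explicit form of the joint law of $(\nu_b^-,Y_{\nu_b^-})$ on $\{\nu_b^-<\nu_c^+\}$ and, above all, the manipulation that extracts the derivative $W^{(q)\prime}$ out of an expression that a priori involves the full L\'evy measure of $Y$; this is the delicate point where the refraction constant $\alpha$ surfaces in the final formula and where the excursion-theoretic machinery of Kyprianou--Loeffen is essentially unavoidable.
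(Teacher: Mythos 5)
Your strategy—reduce to the two-sided exit formulas for $X$ and $Y$ via the strong Markov property at the first down-crossing of $b$—is a legitimate route, but it is genuinely different from the paper's argument and it leaves a real gap. The paper does not re-derive the refracted two-sided exit identity at all: it simply cites Theorem 4 of Kyprianou--Loeffen \cite{kyprianouloeffen2010} for the case $a=0$ and then obtains the general case by the spatial translation $U \mapsto U-a$, $b \mapsto b-a$, followed by a change of variables in the convolution integral. That reduction is essentially a one-line computation. Your decomposition, by contrast, is precisely the skeleton of Kyprianou--Loeffen's original derivation, so you are re-proving their Theorem 4 rather than invoking it.

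The gap in your argument is the step you yourself flag: you need
\[
\mathbf{E}_x\!\left[\ee^{-q\nu_b^-}\,W^{(q)}\bigl(Y_{\nu_b^-}-a\bigr);\ \nu_b^-<\nu_c^+\right]
\]
to collapse into the $\alpha\int_b^x\qscaleY(x-y)\,W^{(q)\prime}(y-a)\,\dd y$ correction, but ``invoking the joint Gerber--Shiu identity and integrating by parts'' is not enough to establish that. The Gerber--Shiu expression for this expectation a priori involves a double integral against the resolvent density of $Y$ and the L\'evy measure $\Pi$; extracting the derivative $W^{(q)\prime}$ of the $X$-scale function from it is exactly the nontrivial algebraic identity proved in Theorem 16 of \cite{kyprianouloeffen2010} (which in the present paper becomes the $p=q$ case of Lemma~\ref{L:mainlemma}). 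Note also that Lemma~\ref{L:mainlemma} is itself proved here \emph{using} Theorem~\ref{T:twosidedforU}, so you cannot borrow it without circularity; you would have to carry out the compensation-formula or excursion-theoretic computation from scratch. A second, smaller issue: setting $x=b$ to solve for $\e_b[\,\cdot\,]$ requires $\qscaleY(0)\neq 0$, i.e.\ bounded-variation paths of $Y$; for unbounded variation you would need an additional approximation argument (as the paper does, but in the proof of Theorem~\ref{T:stopaboveandbelow}, not here). In short, the approach can be made to work but amounts to redoing the hard part of \cite{kyprianouloeffen2010}, whereas the paper's proof sidesteps it entirely by a spatial shift.
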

\begin{proof}
The case when $a=0$ has been proved in \cite{kyprianouloeffen2010}. For $a \in \mathbb{R}$ such that $a \leq x,b \leq c$, we use a quasi-space-homogeneity property of $U$:
$$
\e_x \left[ \mathrm{e}^{-q \kappa_c^+} ; \kappa_c^+ < \kappa_a^- \right] = \e_{x-a} \left[ \mathrm{e}^{-q \tilde{\kappa}_{c-a}^+} ; \tilde{\kappa}_{c-a}^+ < \tilde{\kappa}_0^- \right] ,
$$
where $\tilde{\kappa}_{c-a}^+$ and $\tilde{\kappa}_0^- $ represent stopping times associated with the solution of
\begin{equation*}
U_t = X_t - \alpha \int_0^t \ind_{\{U_s > b-a\}} \mathrm{d}s , \quad t \geq 0 .
\end{equation*}
Using Theorem 4 in \cite{kyprianouloeffen2010} and changing variables, we get
\begin{multline*}
\e_{x-a} \left[ \mathrm{e}^{-q \tilde{\kappa}_{c-a}^+} ; \tilde{\kappa}_{c-a}^+ < \tilde{\kappa}_0^- \right] = W^{(q)} (x-a) + \alpha \ind_{\{x-a \geq b-a\}} \int_{b-a}^{x-a} \mathbb{W}^{(q)}(x-a-y) W^{(q) \prime}(y) \mathrm{d}y \\
= W^{(q)} (x-a) + \alpha \ind_{\{x \geq b\}} \int_b^x \mathbb{W}^{(q)}(x-y) W^{(q) \prime}(y-a) \mathrm{d}y
\end{multline*}
and the result follows. The second identity is derived in the same way.
\end{proof}

Finally, the probability of \textit{classical} ruin for a refracted L\'evy process is given by
$$
\p_x \left( \kappa_0^- < \infty \right) = 1 - \left( \frac{\mathbf{E} \left[ Y_1 \right] \vee 0}{1-\alpha W(b)} \right) w^{(0)}(x;0) ,
$$
where $\mathbf{E} \left[ Y_1 \right] = \e \left[ X_1 \right] - \alpha$. In our case, since it is assumed that $\alpha < \e [X_1]$ (net profit condition), we have
\begin{multline}\label{E:refractedruinproba}
\p_x \left( \kappa_0^- < \infty \right) = 1 - \left( \frac{\e \left[ X_1 \right] - \alpha}{1-\alpha W(b)} \right) w^{(0)}(x;0) \\
= 1 - \left( \frac{\e \left[ X_1 \right] - \alpha}{1-\alpha W(b)} \right) \left\lbrace W(x) + \alpha \ind_{\{x \geq b\}} \int_b^x \mathbb{W}(x-y) W^\prime(y) \mathrm{d}y \right\rbrace .
\end{multline}

\begin{remark}
If no restructuring is undertaken, i.e., if $\alpha=0$, then $U=X=Y$ and there is only one process in the model. Then, the probability of ruin is the one given in~\eqref{E:classicalruinproba}.
\end{remark}

\begin{remark}
It is important to note that $U$, and therefore $\{w^{(q)}, q \geq 0\}$ and $\{z^{(q)}, q \geq 0\}$, all depend on the fixed values of $\alpha$ and $b$.
\end{remark}

Before stating the main results of this paper, we present a few identities relating the different scale functions. We can show (by taking Laplace transforms on both sides of the equation) that, for $p,q,x \geq 0$,
\begin{multline*}
\alpha \int_0^x \mathbb{W}^{(p)}(x-y) W^{(q)}(y) \mathrm{d}y + (p-q) \int_0^x \int_0^y \mathbb{W}^{(p)}(y-z) W^{(q)}(z) \mathrm{d}z \mathrm{d}y \\
= \int_0^x \mathbb{W}^{(p)}(y) \mathrm{d}y - \int_0^x W^{(q)}(y) \mathrm{d}y ,
\end{multline*}
which is a generalization of the first displayed equation in Section 8 of \cite{kyprianouloeffen2010}. Differentiating with respect to $x$ yields the following identity
\begin{multline}\label{E:convolution}
(q-p) \int_0^x \mathbb{W}^{(p)}(x-y) W^{(q)}(y) \mathrm{d}y \\
= W^{(q)}(x) - \mathbb{W}^{(p)}(x) + \alpha \left( W^{(q)}(0) \mathbb{W}^{(p)}(x) + \int_0^x \mathbb{W}^{(p)}(x-y) W^{(q) \prime}(y) \mathrm{d}y \right) ,
\end{multline}
which is a generalization of Equation~(5) in \cite{loeffenetal2012}. Further, we derive, for $x>b$,
\begin{equation}\label{E:rep_wq}
\wq (x;0) = \left( 1-\alpha W^{(q)}(0) \right) \mathbb{W}^{(q)} (x) - \alpha \int_0^b \mathbb{W}^{(q)}(x-y) W^{(q) \prime}(y) \mathrm{d}y .
\end{equation}

\subsection{An example}

There are various examples of SNLPs for which an explicit formula exists for the scale function $W^{(q)}$. For example, when $X$ is a compound Poisson process risk process with a jump distribution that has a Laplace transform which is the ratio of two polynomials, then the Laplace transform of the scale function $1/(\psi(\lambda)-q)$ is also a rational function and an explicit expression for the scale function $W^{(q)}$ is known.

We now present the case of a L\'evy jump-diffusion process where the jump distribution is a mixture of exponentials. In other words,
$$
X_t = \drift t + \sigma B_t - \sum_{i=1}^{N_t} \xi_i ,
$$
where $\sigma>0$, $\drift \in \mathbb R$, $B=(B_t)_{t \geq 0}$ is a Brownian motion, $N=(N_t)_{t \geq 0}$ is a Poisson process with intensity $\eta>0$, and $\{\xi_1,\xi_2,\ldots\}$ are iid (positive) random variables with common probability density function given by
\begin{equation*}
f_\xi(y) = \left( \sum_{i=1}^n a_i \alpha_i \mathrm e^{-\alpha_i y} \right) \ind_{\{y>0\}} ,
\end{equation*}
where $n$ is a positive integer, $0<\alpha_1<\alpha_2<\ldots<\alpha_n$ and $\sum_{i=1}^n a_i=1$, where $a_i>0$ for all $i=1,\ldots,n$. All of the aforementioned objects are mutually independent.

The Laplace exponent of $X$ is then clearly given by
$$
\psi(\lambda) = \drift \lambda + \frac12\sigma^2\lambda^2 + \eta \left( \sum_{i=1}^n \frac{a_i \alpha_i}{\lambda+\alpha_i} - 1 \right) , 
$$
for $\lambda>-\alpha_1$. In this case,
$$
\e \left[ X_1 \right] = \psi'(0+) = \drift - \eta \sum_{i=1}^n \frac{a_i}{\alpha_i} .
$$

For $q>0$ or $\psi'(0+) \neq 0$, one can write (see e.g.\ \cite{loeffenetal2012})
\begin{equation*}\label{partialfraction}
\frac{1}{\psi(\lambda)-q} = \sum_{i=1}^{n+2} \left( \psi' \left( \theta^{(q)}_i \right) \left( \lambda-\theta_i^{(q)} \right) \right)^{-1} ,
\end{equation*}
for $\lambda \in \mathbb R \backslash \left( \left\lbrace \theta_1^{(q)},\ldots,\theta_{n+2}^{(q)} \right\rbrace \cup \left\lbrace \alpha_1,\ldots,\alpha_n \right\rbrace \right)$, where $\theta_1^{(q)}> \theta_2^{(q)}> \ldots> \theta_{n+2}^{(q)}$ are the roots of $\lambda \mapsto \psi(\lambda)-q$ and are such that $\theta_1^{(q)}=\Phi(q)$ and $\theta_{n+2}^{(q)}  < -\alpha_n <  \theta_{n+1}^{(q)} < -\alpha_{n-1} < \theta_{n}^{(q)} \ldots <  -\alpha_1 < \theta_2^{(q)} < \theta_1^{(q)}$, since $\sigma$ is assumed here to be strictly positive.

In conclusion, by Laplace inversion, we have for $q>0$ or for $q=0$ and $\psi'(0) \neq 0$, that for $x \geq 0$
\begin{align*}
W^{(q)}(x) &= \sum_{i=1}^{n+2} \frac{ \mathrm e^{ \theta_i^{(q)} x} }{ \psi' \left( \theta_i^{(q)} \right) }, \\
Z^{(q)}(x) &=
\begin{cases}
q \sum_{i=1}^{n+2} \frac{ \mathrm e^{ \theta_i^{(q)} x} }{ \psi' \left( \theta_i^{(q)} \right) \theta_i^{(q)} } & \text{if $q>0$}, \\
1 & \text{if $q=0$}.
\end{cases} 
\end{align*}
Of course, $\mathbb{W}^{(q)}$ and $\mathbb{Z}^{(q)}$ will look just the same (we only need to change $\drift$ for $\drift-\alpha$ at the beginning of the above procedure). Therefore, computing derivatives and integrals of those scale functions, in particular expressions for $\wq$ and $\zq$, will be very easy, thanks to the exponential form of all those scale functions.

\begin{remark}
We could also have phase-type-distributed random variables, instead of a mixture of exponentials and still get scale functions that are the sum of exponential functions. See \cite{egamiyamazaki2010}.
\end{remark}

On the other hand  (when considering other examples for which  the $q$-scale function is not known in explicit form), there are good numerical methods for dealing with Laplace inversion  (cf. \cite{kuznetsovetal2011}*{Section 5} which deals specifically with Laplace inversion of the scale function.

Finally, for more details on spectrally negative L\'{e}vy processes and their use in ruin theory, the reader is referred to \cites{kyprianou2012, kyprianou2006}. For examples and numerical techniques related to the computation of scale functions, we suggest to look at \cite{kuznetsovetal2011}.

\section{Time spent in the red zone}

Now, we derive the joint Laplace transforms of
$$
\left( \kappa_a^- , \int_0^{\kappa_a^-} \ind_{\{U_s < b\}} \mathrm{d}s \right) \quad \text{and} \quad \left( \kappa_c^+ , \int_0^{\kappa_c^+} \ind_{\{U_s < b\}} \mathrm{d}s \right) ,
$$
where $a \leq x,b \leq c$, from which all subsequent results will be derived. Note that, for example, the random variable $\int_0^{\kappa_a^-} \ind_{\{U_s < b\}} \mathrm{d}s$ is the time spent by $U$ below $b$ (occupation time of the red zone) until level $a$ is crossed.

The structure of our main results is the same as in \cite{loeffenetal2012}, thanks to the Markovian property of $U$ (see Theorem 1). Note that we limit ourselves to the time spent in the red zone, as opposed to any interval; our methodology would apply to any finite interval at the cost of more complicated expressions, namely with extra convolution terms.
\begin{theorem}\label{T:stopaboveandbelow}
For  $a \leq x,b \leq c$ and for $p,q \geq 0$,
\begin{equation*}\label{E:stopabove}
\e_x \left[ \mathrm{e}^{- p \kappa_c^+ - q \int_0^{\kappa_c^+} \ind_{\{U_s < b\}} \mathrm{d}s } ; \kappa_c^+ < \kappa_a^- \right] =  \frac{w^{(p+q)}(x;a) - q \int_{b}^{x} \mathbb W^{(p)}(x-y) w^{(p+q)}(y;a) \mathrm{d}y}{w^{(p+q)}(c;a) - q \int_{b}^{c} \mathbb W^{(p)}(c-y) w^{(p+q)}(y;a) \mathrm{d}y} .
%
\end{equation*}
%
and
\begin{multline*}\label{E:stopbelow}
\e_x \left[ \mathrm{e}^{- p \kappa_a^- - q \int_0^{\kappa_a^-} \ind_{\{U_s < b\}} \mathrm{d}s } ; \kappa_a^- < \kappa_c^+ \right] \\
=  z^{(p+q)}(x;a) - q \int_{b}^{x} \mathbb W^{(p)}(x-y) z^{(p+q)}(y;a) \mathrm{d}y \\
+ \frac{z^{(p+q)}(c;a) - q \int_{b}^{c} \mathbb W^{(p)}(c-y) z^{(p+q)}(y;a) \mathrm{d}y}{w^{(p+q)}(c;a) - q \int_{b}^{c} \mathbb W^{(p)}(c-y) w^{(p+q)}(y;a) \mathrm{d}y} \\
\times \left( w^{(p+q)}(x;a) - q \int_{b}^{x} \mathbb W^{(p)}(x-y) w^{(p+q)}(y;a) \mathrm{d}y \right) .
\end{multline*}
\end{theorem}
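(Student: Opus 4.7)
The plan is to mirror the strategy of \cite{loeffenetal2012}, adapted to the refracted setting, invoking the strong Markov property of $U$ (Theorem 1) together with the two-sided exit identities \eqref{E:exitabove}, \eqref{E:exitbelow} for $X$, their analogues for $Y$, and Theorem \ref{T:twosidedforU} for $U$. Write $f(x)$ for the left-hand side of the first identity, extended by $f(y) = 0$ for $y \leq a$.

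For $a \leq x \leq b$, below the red-zone boundary, $U$ coincides with $X$ up to $\kappa_b^+$ and stays $\leq b$ throughout (so the red-zone indicator is identically $1$); moreover $X$ creeps upward to $b$ because it has no positive jumps. On $\{\kappa_a^- < \kappa_b^+\}$ the process exits through $a$ first, so $\kappa_c^+ > \kappa_a^-$ and the contribution to $f(x)$ vanishes. Applying the strong Markov property of $U$ at $\kappa_b^+$ together with \eqref{E:exitabove} at rate $p+q$ yields
\begin{equation*}
f(x) \;=\; f(b)\cdot\frac{W^{(p+q)}(x-a)}{W^{(p+q)}(b-a)},\qquad a\leq x\leq b,
\end{equation*}
which matches the claimed formula on $[a,b]$ since $w^{(p+q)}(x;a) = W^{(p+q)}(x-a)$ and $\int_b^x \mathbb W^{(p)}(x-y)\,w^{(p+q)}(y;a)\,\mathrm dy = 0$ there.

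For $b \leq x \leq c$, above the boundary, $U$ agrees with $Y$ up to $\kappa_b^-$ and accrues no red-zone time; conditioning on the first exit of $Y$ from $[b,c]$ and substituting Step~1 for $f$ on $(-\infty, b]$ gives
\begin{equation*}
f(x) \;=\; \frac{\mathbb W^{(p)}(x-b)}{\mathbb W^{(p)}(c-b)} \;+\; \frac{f(b)}{W^{(p+q)}(b-a)}\,\mathbf E_x\!\left[\ee^{-p\nu_b^-}\,W^{(p+q)}(Y_{\nu_b^-}-a)\ind_{\{Y_{\nu_b^-}>a\}};\,\nu_b^-<\nu_c^+\right],
\end{equation*}
where the expectation is a classical overshoot functional of $Y$, expressible via the $p$-resolvent of $Y$ killed upon exit from $[b,c]$ and hence purely in terms of $\mathbb W^{(p)}$ (and the L\'evy measure of $Y$ for strict overshoots). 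The constant $f(b)$ is then pinned down either by consistency of the two representations at $x=b$, by iterating the argument across successive excursions of $U$ from $b$ and summing a geometric series, or by characterising $f$ as the unique bounded solution of the killed-generator equation on $(a,c)\setminus\{b\}$ with $f(a-) = 0$ and $f(c+) = 1$. I expect the principal obstacle to be the algebraic reconciliation: one must show that the overshoot convolution combines with the factor $W^{(p+q)}(\cdot - a)$ to produce precisely $q\int_b^{x}\mathbb W^{(p)}(x-y)\,w^{(p+q)}(y;a)\,\mathrm dy$ as in the claimed numerator (and correspondingly for the denominator). This is effected by the convolution identity \eqref{E:convolution} applied with $q$ replaced by $p+q$, together with the representation \eqref{E:rep_wq} of $w^{(p+q)}$ above $b$, while carefully tracking boundary terms at $y=0$ (where $W^{(p+q)}(0)$ may be nonzero) and at $y=b$.

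The second identity, for $\e_x\!\left[\ee^{-p\kappa_a^- - q\int_0^{\kappa_a^-}\ind_{\{U_s<b\}}\mathrm ds};\kappa_a^-<\kappa_c^+\right]$, is obtained by the same argument: in Step~1 one uses \eqref{E:exitbelow} (producing a $z^{(p+q)}$-term plus a multiple of $w^{(p+q)}$), in Step~2 the downward exit functional of $Y$ contributes $\mathbb Z^{(p)}$-terms, and the identical algebraic reconciliation produces the stated bi-linear form in numerators and denominators.
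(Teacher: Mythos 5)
Your skeleton is the same as the paper's: decompose at the first passage to $b$ from below (using upward skip-freeness and \eqref{E:exitabove}) on $[a,b]$, then decompose at the first exit of $Y$ from $[b,c]$ on $[b,c]$, and finally pin down $f(b)$. The displayed recursion for $b\leq x\leq c$ is exactly the paper's equation \eqref{E:above}. Where you part ways is precisely at the step you flag as ``the principal obstacle,'' and that is a genuine gap, not merely an algebraic nuisance.

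Concretely, the whole theorem reduces to evaluating
\[
\mathbf E_x\!\left[\ee^{-p\nu_b^-}\,W^{(p+q)}\!\bigl(Y_{\nu_b^-}-a\bigr);\,\nu_b^-<\nu_c^+\right],\qquad b\leq x\leq c,
\]
in closed form. You propose to compute this as a resolvent/overshoot functional using the killed potential density of $Y$ together with the L\'evy measure, then reconcile with the target via \eqref{E:convolution} and \eqref{E:rep_wq}. That route would require an identity linking $\int r_Y^{(p)}(x,y;b,c)\int_{(y-b,\infty)}W^{(p+q)}(y-z-a)\,\Pi(\dd z)\,\dd y$ (plus a creeping term when $\sigma>0$) to the combination $\wq(\cdot;a) - q\int_b^\cdot\mathbb W^{(p)}(\cdot-y)\wq(y;a)\,\dd y$; the convolution identity \eqref{E:convolution} alone does not produce this because it contains no $\Pi$-term, so the reconciliation you assert as routine is in fact the hard part, and you do not carry it out. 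The paper avoids this entirely through Lemma~\ref{L:mainlemma}: it first shows, by a strong-Markov argument on $U$ and Theorem~\ref{T:twosidedforU}, that $\wq(\cdot;0)$ satisfies the ``martingale-type'' property \eqref{mgproperty} and hence belongs to the function class $\mathcal V_b^{(q)}(Y)$ of \cite{loeffenetal2012}; Lemma~2.1 of that paper then upgrades the $p=q$ identity to the two-parameter $(p,q)$ version in one stroke, and a spatial shift $a\mapsto 0$, $b\mapsto b-a$ converts it to the form needed here. Your proposal contains neither this functional-class observation nor the overshoot computation it replaces, so as written the proof does not close.

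A second, smaller gap: you determine $f(b)$ by ``consistency of the two representations at $x=b$.'' This only works when $X$ has bounded variation, since it relies on $\mathbb W^{(p)}(0)\neq 0$; when $\mathbb W^{(p)}(0)=0$ the $x=b$ equation degenerates to a tautology and gives no information about $f(b)$. The paper proves the bounded-variation case this way and then appeals to the approximation scheme of \cite{loeffenetal2012} for the unbounded-variation case. Your alternative suggestions (geometric series over excursions, generator characterization) could in principle handle both cases uniformly, but you only name them; one would need to be executed and shown to match the claimed expression.
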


These results are fundamental in developing a risk management toolkit based on occupation times, as we will see in the next sections. For example, we will derive several results needed to compute the probability of bankruptcy and the probability of Parisian ruin in this model with restructuring/refraction. They are extensions of those obtained in \cite{loeffenetal2012} for the case $\alpha=0$. Also, our results improve the results in \cite{kyprianouetal2012} because we are dealing with the case $p>0$ and we consider a general starting point $x$.

Before proving Theorem~\ref{T:stopaboveandbelow}, we will need the following technical lemma.
\subsection{Technical lemma}

Here is a generalization of Theorem 16 in \cite{kyprianouloeffen2010}, in the spirit of Lemma 2.1 in \cite{loeffenetal2012}. Recall that $W^{(q)}$ and $Z^{(q)}$ are the scale functions associated with $X$, while $\mathbb W^{(q)}$ and $\mathbb Z^{(q)}$ are those associated with $Y$.
\begin{lemma}\label{L:mainlemma}
For all $p,q \geq 0$ and $x,c$ such that $b \leq x \leq c$,
\begin{multline*}
\mathbf E_x \left[ \mathrm e^{-p\nu_b^-} W^{(q)}(Y_{\nu_b^-}) ; \nu_b^-<\nu_c^+ \right] = \wq(x;0) - (q-p)\int_b^x \mathbb W^{(p)}(x-y) \wq(y;0) \mathrm{d}y \\
- \frac{ \mathbb W^{(p)}(x-b)  }{ \mathbb W^{(p)}(c-b) } \left( \wq(c;0) - (q-p) \int_b^c \mathbb W^{(p)}(c-y) \wq(y;0) \mathrm{d}y \right) .
\end{multline*}
and
\begin{multline*}
\mathbf E_x \left[ \mathrm e^{-p\nu_b^-} Z^{(q)}(Y_{\nu_b^-}) ; \nu_b^-<\nu_c^+ \right] = \zq(x;0) - (q-p)\int_b^x \mathbb W^{(p)}(x-y) \zq(y;0) \mathrm{d}y \\
- \frac{ \mathbb W^{(p)}(x-b)  }{ \mathbb W^{(p)}(c-b) } \left( \zq(c;0) - (q-p) \int_b^c \mathbb W^{(p)}(c-y) \zq(y;0) \mathrm{d}y \right) .
\end{multline*}
\end{lemma}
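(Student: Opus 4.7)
The plan is to mirror the proof strategy of Lemma~2.1 in \cite{loeffenetal2012}: I establish a resolvent-type perturbation identity that reduces the general-$p$ statement to the base case $p=q$, which is itself an application of Theorem~16 of \cite{kyprianouloeffen2010}. Write
$$\mathcal W_r(x) := \mathbf E_x \left[\mathrm e^{-r\nu_b^-} W^{(q)}(Y_{\nu_b^-});\,\nu_b^- < \nu_c^+\right], \qquad r \in \{p,q\}.$$
Starting from the elementary identity $\mathrm e^{-pt} - \mathrm e^{-qt} = (q-p)\int_0^t \mathrm e^{-ps}\mathrm e^{-q(t-s)}\,\mathrm ds$ at $t = \nu_b^-$, I multiply by $W^{(q)}(Y_{\nu_b^-})\mathbf 1_{\{\nu_b^- < \nu_c^+\}}$, take $\mathbf E_x$, and apply Fubini together with the strong Markov property of $Y$ at time $s$ (valid on $\{s < \nu_b^- \wedge \nu_c^+\}$) to obtain
$$\mathcal W_p(x) - \mathcal W_q(x) = (q-p)\,\mathbf E_x \int_0^{\nu_b^- \wedge \nu_c^+} \mathrm e^{-ps} \mathcal W_q(Y_s)\, \mathrm ds.$$

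Next, I invoke the standard $p$-potential density formula for $Y$ killed on exiting $(b,c)$,
$$\mathbf E_x \int_0^{\nu_b^- \wedge \nu_c^+} \mathrm e^{-ps} f(Y_s)\, \mathrm ds = \int_b^c f(y) \left[\frac{\mathbb W^{(p)}(x-b) \mathbb W^{(p)}(c-y)}{\mathbb W^{(p)}(c-b)} - \mathbb W^{(p)}(x-y)\right]\mathrm dy,$$
thereby expressing the right-hand side as an integral of $\mathcal W_q$ against the killed resolvent. For the base case $p=q$, Theorem~16 of \cite{kyprianouloeffen2010} applied to $W^{(q)}$, together with the strong Markov property at $\nu_c^+$ and the exit identity $\mathbf E_x[\mathrm e^{-q\nu_c^+};\,\nu_c^+ < \nu_b^-] = \mathbb W^{(q)}(x-b)/\mathbb W^{(q)}(c-b)$, gives
$$\mathcal W_q(x) = \wq(x;0) - \frac{\mathbb W^{(q)}(x-b)}{\mathbb W^{(q)}(c-b)} \wq(c;0).$$

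Substituting this expression for $\mathcal W_q$ into the combination of the two preceding displays, and invoking the classical scale-function convolution identity
$$(q-p) \int_0^z \mathbb W^{(p)}(z-u)\mathbb W^{(q)}(u)\,\mathrm du = \mathbb W^{(q)}(z) - \mathbb W^{(p)}(z)$$
at $z = x-b$ and $z = c-b$, all terms involving the ratio $\mathbb W^{(q)}(x-b)/\mathbb W^{(q)}(c-b)$ cancel exactly, and the residual $\wq(c;0)$ terms collapse into the factor $-\mathbb W^{(p)}(x-b)/\mathbb W^{(p)}(c-b)$; rearranging gives precisely the stated right-hand side. The $Z^{(q)}$ identity is proved by the same argument verbatim, with $Z^{(q)}$ replacing $W^{(q)}$ throughout (its base case $p=q$ is again obtained from Theorem~16 of \cite{kyprianouloeffen2010}). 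The main obstacle is this final algebraic simplification: one has to keep careful track of how the $\mathbb W^{(q)}$-ratio inherited from the base case and the $\mathbb W^{(p)}$-coefficients produced by the resolvent combine, via the convolution identity, to leave only the clean $\mathbb W^{(p)}$-ratio that appears in the statement.
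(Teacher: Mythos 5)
Your proof is correct and follows essentially the same strategy as the paper: establish the base case $p=q$, namely $\mathcal W_q(x) = \wq(x;0) - \tfrac{\mathbb W^{(q)}(x-b)}{\mathbb W^{(q)}(c-b)}\wq(c;0)$, and then pass to general $p$ via the killed $p$-resolvent density of $Y$ on $(b,c)$ together with the convolution identity $(q-p)\int_0^z \mathbb W^{(p)}(z-u)\mathbb W^{(q)}(u)\,\mathrm du = \mathbb W^{(q)}(z)-\mathbb W^{(p)}(z)$; the cancellation you describe does carry through. The only real difference is one of packaging: the paper verifies that $\wq(\cdot\,;0)$ and $\zq(\cdot\,;0)$ satisfy the defining property of the class $\mathcal V^{(q)}_b(Y)$ (deriving the $p=q$ case from Theorem~\ref{T:twosidedforU} and the equality in law of $\{Y_t,t<\nu_b^-\}$ and $\{U_t,t<\kappa_b^-\}$) and then delegates the perturbation step to Lemma~2.1 of \cite{loeffenetal2012}, whereas you spell out the content of that cited lemma explicitly.
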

\begin{proof}
Since $\{Y_t, t < \nu_b^-\}$ under $\mathbf{P}_x$ and $\{U_t, t < \kappa_b^-\}$ under $\p_x$ have the same law when $x \geq b$, we have
$$
\mathbf E_x \left[ \mathrm e^{-p\nu_b^-} W^{(q)}(Y_{\nu_b^-}) ; \nu_b^-<\nu_c^+ \right] = \mathbb E_x \left[ \mathrm e^{-q \kappa_b^-} W^{(q)}(U_{\kappa_b^-}) ; \kappa_b^-<\kappa_c^+ \right] .
$$
Consequently, according to Lemma 2.1 in \cite{loeffenetal2012}, it suffices to show that
$$
\mathbb E_x \left[ \mathrm e^{-q \kappa_b^-} \wq (U_{\kappa_b^-};0) ; \kappa_b^-<\kappa_c^+ \right] = \wq(x;0) - \frac{\mathbb W^{(q)}(x-b)}{\mathbb W^{(q)}(c-b)} \wq(c;0) .
$$
The latter is easily obtained using the strong Markov property of $U$ (see Theorem 1) and the solution to the two-sided exit problem for $U$ (see Theorem~\ref{T:twosidedforU}): indeed, for $b \leq x \leq c$, we can write
\begin{align*}
\frac{\wq(x;0)}{\wq(c;0)} &= \e_x \left[ \mathrm{e}^{-q\kappa_c^+} ; \kappa_c^+<\kappa_0^- \right] \\
&= \e_x \left[ \mathrm{e}^{-q\kappa_c^+} ; \kappa_c^+<\kappa_b^- \right] + \e_x \left[ \mathrm{e}^{-q \kappa_c^+} ; \kappa_b^-<\kappa_c^+<\kappa_0^- \right] \\
&=  \frac{\mathbb{W}^{(q)}(x-b)}{\mathbb{W}^{(q)}(c-b)} + \mathbf{E}_x \left[ \mathrm{e}^{-q \nu_b^-} \e_{Y_{\nu_b^-}} \left[ \mathrm{e}^{-q \tau_b^+} ; \tau_b^+<\tau_0^- \right] ; \nu_b^- < \nu_c^+ \right] \\
&=  \frac{\mathbb{W}^{(q)}(x-b)}{\mathbb{W}^{(q)}(c-b)} + \e_b \left[ \mathrm{e}^{-q\kappa_c^+} ; \kappa_c^+<\kappa_0^- \right] \mathbf{E}_x \left[ \mathrm{e}^{-q \nu_b^-} \frac{W^{(q)}(Y_{\nu_b^-})}{W^{(q)}(b)} ; \nu_b^- < \nu_c^+ \right] ,
\end{align*}
where we used again that $\{Y_t, t < \nu_b^-\}$ under $\mathbf{P}_x$ and $\{U_t, t < \kappa_b^-\}$ under $\p_x$ have the same law when $x \geq b$, but also that $\{X_t, t < \tau_b^+\}$ and $\{U_t, t < \kappa_b^+\}$ have the same law under $\p_x$ when $x \leq b$. From the definition of $\wq(\cdot;0)$, we know that $W^{(q)}(\cdot)$ and $\wq(\cdot;0)$ coincide on $(-\infty,b]$, so then we have that $W^{(q)}(Y_{\nu_b^-}) = \wq (Y_{\nu_b^-};0)$.

As in \cite{loeffenetal2012}, for $q \geq 0$, we can define $\mathcal V^{(q)}_b (Y)$ to be the function space associated with the SNLP $Y$ consisting of positive and measurable functions $v^{(q)}(x)$, $x\in(-\infty,\infty)$, that satisfy:
\begin{equation}\label{mgproperty}
\mathbf E_x \left[ \mathrm e^{-q\nu_b^-} v^{(q)}(Y_{\nu_b^-}) ; \nu_b^-<\nu_c^+ \right] = v^{(q)}(x) - \frac{\mathbb W^{(q)}(x-b)}{\mathbb W^{(q)}(c-b)} v^{(q)}(c) ,
\end{equation}
for all $x,c$ such that $b \leq x \leq c$. From the above calculations, it follows that $\wq(\cdot;0)$ satisfies Property~\eqref{mgproperty} and thus $\wq(\cdot;0) \in \mathcal V^{(q)}_b (Y)$, for all $q,b\geq0$.
The result follows from Lemma 2.1 in \cite{loeffenetal2012}.

Similarly, one can prove that $\zq(\cdot;0) \in \mathcal V^{(q)}_b (Y)$, for all $q,b \geq 0$.
\end{proof}

\subsection{Proof of Theorem~\ref{T:stopaboveandbelow}}

Fix $a \leq b \leq c$, for $p,q \geq 0$. For $x \in [a,c]$, define
$$
v(x) = \e_x \left[ \mathrm{e}^{- p \kappa_c^+ - q \int_0^{\kappa_c^+} \ind_{\{U_s < b\}} \mathrm{d}s } ; \kappa_c^+ < \kappa_a^- \right] .
$$
Using the strong Markov property of $X$, the fact that $X$ is skip-free upward and \eqref{E:exitabove}, we can write, for $a \leq x < b$,
\begin{equation*}
v(x) = v(b) \e_x \left[  \mathrm{e}^{-(p+q) \kappa_b^+}; \kappa_b^+<\kappa_a^- \right] = v(b) \e_x \left[  \mathrm{e}^{-(p+q) \tau_b^+}; \tau_b^+<\tau_a^- \right] = v(b) \frac{W^{(p+q)}(x-a)}{W^{(p+q)}(b-a)}.
\end{equation*}
Similarly, for $b \leq x \leq c$, we have
\begin{align}\label{E:above}
v(x) &=  \e_x \left[ \mathrm{e}^{-p \kappa_c^+} ; \kappa_c^+ < \kappa_b^- \right] + \e_x \left[ \mathrm{e}^{-p \kappa_b^-} v \left( U_{\kappa_b^-} \right) ; \kappa_b^- < \kappa_c^+ \right] \notag \\
&=  \frac{\mathbb{W}^{(p)}(x-b)}{\mathbb{W}^{(p)}(c-b)} + \frac{v(b)}{W^{(p+q)}(b-a)} \e_x \left[ \mathrm{e}^{-p \kappa_b^-} W^{(p+q)} \left( U_{\kappa_b^-} - a \right) ; \kappa_b^- < \kappa_c^+ \right] .
\end{align}

We now assume that $X$ has paths of bounded variation. In this case, we have $\mathbb{W}^{(p)}(0) \neq 0$ and thus setting $x=b$ in \eqref{E:above} yields
\begin{equation}\label{E:atb}
v(b) = \frac{\mathbb{W}^{(p)}(0)/\mathbb{W}^{(p)}(c-b)}{1-\frac{1}{W^{(p+q)}(b-a)} \e_b \left[ \mathrm{e}^{-p \kappa_b^-} W^{(p+q)} \left( U_{\kappa_b^-} - a \right) ; \kappa_b^- < \kappa_c^+ \right]} .
\end{equation}
Since $\{Y_t, t < \nu_b^-\}$ under $\mathbf{P}_x$ and $\{U_t, t < \kappa_b^-\}$ under $\p_x$ have the same law when $x \geq b$, and by spatial homogeneity of $Y$, we have
\begin{multline*}
\e_b \left[ \mathrm{e}^{-p \kappa_b^-} W^{(p+q)} \left( U_{\kappa_b^-} - a \right) ; \kappa_b^- < \kappa_c^+ \right] \\
= \mathbf{E}_b \left[ \mathrm{e}^{-p \nu_b^-} W^{(p+q)} \left( Y_{\nu_b^-} - a \right) ; \nu_b^- < \nu_c^+ \right] \\
= \mathbf{E}_{b-a} \left[ \mathrm{e}^{-p \nu_{b-a}^-} W^{(p+q)} \left( Y_{\nu_{b-a}^-} \right) ; \nu_{b-a}^- < \nu_{c-a}^+ \right] .
\end{multline*}
Using Lemma~\ref{L:mainlemma}, we then have that
\begin{multline*}
\e_b \left[ \mathrm{e}^{-p \kappa_b^-} W^{(p+q)} \left( U_{\kappa_b^-} - a \right) ; \kappa_b^- < \kappa_c^+ \right] \\
= w^{(p+q)}(b-a;b-a) - \frac{\mathbb W^{(p)}(0)}{\mathbb W^{(p)}(c-b)} \left( w^{(p+q)}(c;a) - q \int_{b}^{c} \mathbb W^{(p)}(c-y) w^{(p+q)}(y;a) \mathrm{d}y \right) ,
\end{multline*}
where $w^{(p+q)}(b-a;b-a) = W^{(p+q)}(b-a)$. Plugging this into \eqref{E:atb} yields
$$
v(b) = \frac{W^{(p+q)}(b-a)}{w^{(p+q)}(c;a) - q \int_{b}^{c} \mathbb W^{(p)}(c-y) w^{(p+q)}(y;a) \mathrm{d}y} .
$$
Plugging now the value of $v(b)$ just obtained into \eqref{E:above} and using again Lemma~\ref{L:mainlemma} yields, for $a \leq x \leq c$,
\begin{equation*}
v(x) =  \frac{w^{(p+q)}(x;a) - q \int_{b}^{x} \mathbb W^{(p)}(x-y) w^{(p+q)}(y;a) \mathrm{d}y}{w^{(p+q)}(c;a) - q \int_{b}^{c} \mathbb W^{(p)}(c-y) w^{(p+q)}(y;a) \mathrm{d}y} .
\end{equation*}

The case where $X$ has paths of unbounded variation follows using the same approximating procedure as in \cite{loeffenetal2012} (see also \cite{kyprianouetal2012}).

The proof of the second part of the Theorem is similar. For sake of brevity, the details are left to the reader.

\section{Probability of bankruptcy}

We now apply our main result to compute the probability of bankruptcy. As mentioned previously, we consider level $b$ as a solvency capital requirement level and the interval $(-\infty,b)$ as the \textit{red zone}. We choose the following definition for bankruptcy: if $U$ spends too much time in the red zone or if $U$ drops too deep, then bankruptcy is declared. To be more precise, for $q>0$, define the function $\omega \colon \mathbb R \to [0,\infty)$ by
$$
\omega(x) =
\begin{cases}
0 & \text{if $x \geq b$}, \\
q & \text{if $0 \leq x < b$}, \\
\infty & \text{if $x<0$}.
\end{cases}
$$
and the corresponding bankruptcy time $\rho_\omega$ by
\begin{equation*}
 \rho_\omega = \inf \left\lbrace t>0 \colon \int_0^t \omega (U_s) \mathrm{d}s > \mathbf e_1 \right\rbrace ,
\end{equation*}
where $\mathbf e_1$ is an independent exponentially distributed random variable with rate $1$. Therefore, bankruptcy occurs at rate $q$ when $U$ is between $0$ and $b$, and bankruptcy occurs immediately if $U$ falls below level $0$. The choice of $0$ as the ultimate acceptable surplus level is arbitrary and not restrictive.

\begin{remark}
This definition of bankruptcy is borrowed from Omega models, in which the function $\omega$ is called the rate function. Typically, the rate function is chosen to be a decreasing function equalling zero above the critical level ($b$ in our case) so that bankruptcy does not occur in this situation. This family of models was introduced in \cite{albrecheretal2011b} and further investigated in \cite{gerberetal2012} for Brownian motion with drift, in \cite{loeffenetal2012} for spectrally negative L\'evy processes, and in \cite{albrecherlautscham2013} for compound Poisson processes and more general bankruptcy rate function. All these papers deal with the case $\alpha=0$.
%
\end{remark}


Suppose that the positive loading condition holds, which in our model means that $\e[X_1]>\alpha$. This implies that bankruptcy does not happen almost surely (see Equation~\eqref{E:refractedruinproba}) and, if it does occur, we have that either it occurs while the surplus is between $0$ and $b$ or it occurs due to the surplus process dropping below the level $0$. Mathematically, for any initial surplus $x \in \mathbb R$, we clearly have the following relationship:
$$
1 = \mathbb{P}_x \left( 0 \leq U_{\rho_\omega} < b, \rho_\omega<\infty \right)  + \mathbb{P}_x \left( U_{\rho_\omega}< 0, \rho_\omega<\infty \right) + \mathbb{P}_x \left( \rho_\omega=\infty \right) .
$$
The probability that bankruptcy occurs while the surplus is between $0$ and $b$ is given by
$$
\mathbb{P}_x \left( 0 \leq U_{\rho_\omega} < b, \rho_\omega<\infty \right) = \mathbb P_x \left( \int_0^{\kappa_0^-} \omega (U_s) \mathrm{d}s > \mathbf e_1 \right) = 1-\mathbb{E}_x \left[ \mathrm{e}^{-q \int_0^{\kappa^-_0} \ind_{\{U_s < b\}} \mathrm{d}s} \right] .
$$
Similarly, the probability that bankruptcy occurs due to the surplus process dropping below $0$ is given by
$$
\mathbb{P}_x \left( U_{\rho_\omega} < 0, \rho_\omega<\infty \right) = \mathbb P_x \left( \int_0^{\kappa_0^-} \omega (U_s) \mathrm{d}s \leq \mathbf e_1, \kappa_0^-<\infty \right) = \mathbb{E}_x \left[ \mathrm{e}^{-q \int_0^{\kappa^-_0} \ind_{\{U_s < b\}} \mathrm{d}s} ; \kappa^-_0<\infty \right] .
$$

In conclusion, the answer is included in the following corollary:
\begin{corollary} Assume the net profit condition $\e[X_1]>\alpha$ is verified.

\begin{itemize}

\item[(i)] For $x,b,q \geq 0$,
\begin{multline*}
\e_x \left[ \mathrm{e}^{- q \int_0^{\kappa_0^-} \ind_{\{U_s < b\}} \mathrm{d}s } ; \kappa_0^- < \infty \right] \\
=  z^{(q)}(x;0) - q \int_{b}^{x} \mathbb W(x-y) z^{(q)}(y;0) \mathrm{d}y \\
+ \frac{\left( \e[X_1] - \alpha \right) + q \int_0^b \left( \mathbb{Z}^{(q)}(y) - \alpha W^{(q)}(y) \mathbb{Z}^{(q)}(b-y) \right) \mathrm{d}y}{Z^{(q)}(b) - \alpha W^{(q)}(b)} \\
\times \left( w^{(q)}(x;0) - q \int_{b}^{x} \mathbb W(x-y) w^{(q)}(y;0) \mathrm{d}y \right) .
\end{multline*}

\item[(ii)] For $x,b,q \geq 0$,
\begin{equation*}
\mathbb{E}_x \left[\mathrm{e}^{-q\int_0^{\kappa^-_{0}} \ind_{\{U_s < b\}} \mathrm{d}s} ; \kappa_0^- = \infty \right] = (\e[X_1]-\alpha) \frac{w^{(q)}(x;0) - q \int_b^x \mathbb{W}(x-y) w^{(q)}(y;0) \mathrm{d}y}{Z^{(q)}(b) - \alpha W^{(q)}(b)}
\end{equation*}
\end{itemize}
\end{corollary}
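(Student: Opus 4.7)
The plan is to deduce both identities from Theorem~\ref{T:stopaboveandbelow} specialised to $p=0$, $a=0$, by letting the upper barrier $c\uparrow\infty$. For part~(ii), the first identity of Theorem~\ref{T:stopaboveandbelow} gives
\begin{equation*}
\e_x\!\left[\ee^{-q\int_0^{\kappa_c^+}\ind_{\{U_s<b\}}\dd s};\ \kappa_c^+<\kappa_0^-\right] = \frac{w^{(q)}(x;0)-q\int_b^x \mathbb W(x-y)\, w^{(q)}(y;0)\,\dd y}{w^{(q)}(c;0)-q\int_b^c \mathbb W(c-y)\, w^{(q)}(y;0)\,\dd y}.
\end{equation*}
Under the net-profit condition $\e[X_1]>\alpha$, equivalently $\mathbf E[Y_1]>0$, the refracted process $U$ drifts to $+\infty$ almost surely on $\{\kappa_0^-=\infty\}$, so $\{\kappa_c^+<\kappa_0^-\}\uparrow\{\kappa_0^-=\infty\}$ and the occupation time $\int_0^{\kappa_c^+}\ind_{\{U_s<b\}}\dd s$ increases to the finite random variable $\int_0^\infty\ind_{\{U_s<b\}}\dd s$. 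Bounded convergence then yields~(ii) as soon as one identifies
\begin{equation}\label{E:limitL}
\lim_{c\to\infty}\Bigl(w^{(q)}(c;0)-q\int_b^c\mathbb W(c-y)\, w^{(q)}(y;0)\,\dd y\Bigr)=\frac{Z^{(q)}(b)-\alpha W^{(q)}(b)}{\e[X_1]-\alpha}.
\end{equation}
Part~(i) follows from the same recipe applied to the second identity of Theorem~\ref{T:stopaboveandbelow}: one has $\{\kappa_0^-<\kappa_c^+\}\uparrow\{\kappa_0^-<\infty\}$, and in addition to~\eqref{E:limitL} one needs the companion limit
\begin{equation*}
\lim_{c\to\infty}\Bigl(z^{(q)}(c;0)-q\int_b^c\mathbb W(c-y)\, z^{(q)}(y;0)\,\dd y\Bigr)=\frac{(\e[X_1]-\alpha)+q\int_0^b\bigl(\mathbb Z^{(q)}(y)-\alpha W^{(q)}(y)\mathbb Z^{(q)}(b-y)\bigr)\dd y}{\e[X_1]-\alpha},
\end{equation*}
which, combined with the surviving additive term $z^{(q)}(x;0)-q\int_b^x\mathbb W(x-y)z^{(q)}(y;0)\,\dd y$, produces the announced formula in~(i).

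The technical core is the proof of these two scale-function limits, and this is where the work concentrates. For~\eqref{E:limitL}, I would substitute the representation~\eqref{E:rep_wq} of $w^{(q)}(\cdot;0)$ on $[b,\infty)$, which expresses both $w^{(q)}(c;0)$ and $w^{(q)}(y;0)$ in terms of $\mathbb W^{(q)}$ and $W^{(q)\prime}$. After swapping the order of integration in the resulting double integral, the left-hand side of~\eqref{E:limitL} becomes a combination of quantities of the form $\mathbb W^{(q)}(\tilde c)-q\int_0^{\tilde c}\mathbb W(\tilde c-u)\mathbb W^{(q)}(u)\,\dd u$ (for shifts $\tilde c = c$ or $\tilde c = c-z$), plus correction terms arising when the $[b,c]$ convolutions are extended to $[0,c]$. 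The classical convolution identity $\mathbb W^{(q)}(\tilde c)-q\int_0^{\tilde c}\mathbb W(\tilde c-u)\mathbb W^{(q)}(u)\,\dd u=\mathbb W(\tilde c)$ for the spectrally negative process $Y$ then collapses the exponentially growing pieces, leaving an expression whose surviving factors all involve $\mathbb W(c-\cdot)$ integrated against bounded kernels supported on $[0,b]$. Since $\mathbb W(c-y)\to 1/(\e[X_1]-\alpha)$ uniformly for $y\in[0,b]$, one passes to the limit to obtain a finite value. A short integration by parts combined with the $p=q$ specialisation of~\eqref{E:convolution}, which yields $\mathbb Z^{(q)}(b)-Z^{(q)}(b)=\alpha q\int_0^b W^{(q)}(z)\mathbb W^{(q)}(b-z)\,\dd z$, rearranges this value into $(Z^{(q)}(b)-\alpha W^{(q)}(b))/(\e[X_1]-\alpha)$. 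The $z^{(q)}$-version is obtained by the same manipulations after replacing $\mathbb W^{(q)}$ by its primitive $\mathbb Z^{(q)}$ in the final collapsing step.

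The main obstacle is precisely this delicate cancellation: for $q>0$, both $w^{(q)}(c;0)$ and $q\int_b^c\mathbb W(c-y)w^{(q)}(y;0)\,\dd y$ blow up like $\ee^{\varphi(q)c}$, so one must rewrite $w^{(q)}$ via~\eqref{E:rep_wq} and invoke the $Y$-convolution identity \emph{before} taking $c\to\infty$. A useful sanity check throughout is the specialisation $q=0$, where~\eqref{E:limitL} reduces to $w^{(0)}(c;0)\to(1-\alpha W(b))/(\e[X_1]-\alpha)$ and both~(i) and~(ii) collapse into the classical refracted ruin formula~\eqref{E:refractedruinproba}.
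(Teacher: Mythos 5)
Your proposal is correct and follows essentially the same route as the paper: both apply Theorem~\ref{T:stopaboveandbelow} with $p=0$, $a=0$, let $c\to\infty$, and reduce the problem to identifying the limit of the $c$-dependent denominator by rewriting $w^{(q)}(\cdot;0)$ via~\eqref{E:rep_wq}, invoking the $Y$-convolution identity, and simplifying with~\eqref{E:convolution}. The only cosmetic differences are the order of the two parts and the exact rearrangement used in the final simplification (the paper normalises by $\mathbb W(c)$ before taking the limit and uses the identity $Z^{(q)}(x)-\alpha W^{(q)}(x)=(1-\alpha W^{(q)}(0))\mathbb Z^{(q)}(x)-\alpha\int_0^x W^{(q)\prime}(y)\mathbb Z^{(q)}(x-y)\dd y$, whereas you take the raw limit and use the $p=q$ case of~\eqref{E:convolution}); these are equivalent.
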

\begin{proof}
For part (i), we clearly have that
$$
\mathbb{E}_x \left[\mathrm{e}^{-q\int_0^{\kappa^-_{0}} \ind_{\{U_s < b\}} \mathrm{d}s} ; \kappa_0^- < \infty \right] = \lim_{c \to \infty} \mathbb{E}_x \left[\mathrm{e}^{-q \int_0^{\kappa^-_{0}} \ind_{\{U_s < b\}} \mathrm{d}s} ; \kappa_0^- < \kappa_c^+ \right] .
$$
From Theorem~\ref{T:stopaboveandbelow}, we know that
\begin{multline*}
\e_x \left[ \mathrm{e}^{- q \int_0^{\kappa_0^-} \ind_{\{U_s < b\}} \mathrm{d}s } ; \kappa_0^- < \kappa_c^+ \right] \\
=  z^{(q)}(x;0) - q \int_{b}^{x} \mathbb W(x-y) z^{(q)}(y;0) \mathrm{d}y \\
+ \frac{z^{(q)}(c;0) - q \int_{b}^{c} \mathbb W(c-y) z^{(q)}(y;0) \mathrm{d}y}{w^{(q)}(c;0) - q \int_{b}^{c} \mathbb W(c-y) w^{(q)}(y;0) \mathrm{d}y} \\
\times \left( w^{(q)}(x;0) - q \int_{b}^{x} \mathbb W(x-y) w^{(q)}(y;0) \mathrm{d}y \right) .
\end{multline*}
It can be shown that, for $c>b$,
\begin{multline*}
w^{(q)}(c;0) - q \int_{b}^{c} \mathbb W(c-y) w^{(q)}(y;0) \mathrm{d}y \\
= \left( 1-\alpha W^{(q)}(0) \right) \left\lbrace \mathbb W(c) + q \int_0^b \mathbb W(c-y) \mathbb W^{(q)}(y) \mathrm{d}y \right\rbrace \\
- \alpha \int_0^b W^{(q) \prime} (z) \left\lbrace \mathbb W(c-z) + q \int_0^{b-z} \mathbb W(c-z-y) \mathbb W^{(q)}(y) \mathrm{d}y \right\rbrace \mathrm{d}z .
\end{multline*}
To prove this last identity, we used Equations~\eqref{E:convolution} and~\eqref{E:rep_wq}; the details are left to the reader. Therefore, since it is assumed that $\alpha < \e [X_1]$ we have $\lim_{c \to \infty} \mathbb{W}(c) = \left( \psi'(0+) - \alpha \right)^{-1} = \left( \e[X_1] - \alpha \right)^{-1}$, and then we get (using the monotone convergence theorem)
\begin{multline*}
\lim_{c \to \infty} \frac{w^{(q)}(c;0) - q \int_{b}^{c} \mathbb W(c-y) w^{(q)}(y;0) \mathrm{d}y}{\mathbb{W}(c)}\\
= \left( 1-\alpha W^{(q)}(0) \right) \mathbb{Z}^{(q)}(b) - \alpha \int_0^b W^{(q) \prime} (z) \mathbb{Z}^{(q)}(b-z) \mathrm{d}z .
\end{multline*}
Integrating by parts (or taking Laplace transforms on both sides) yields
$$
Z^{(q)}(x) - \alpha W^{(q)}(x) = \left( 1-\alpha W^{(q)}(0) \right) \mathbb{Z}^{(q)}(x) - \alpha \int_0^x W^{(q) \prime} (y) \mathbb{Z}^{(q)}(x-y) \mathrm{d}y .
$$
We can also show that, for $c>b$,
\begin{multline*}
z^{(q)}(c;0) - q \int_{b}^{c} \mathbb W(c-y) z^{(q)}(y;0) \mathrm{d}y \\
= 1 + q \int_0^b \mathbb W(c-y) \mathbb Z^{(q)}(y) \mathrm{d}y \\
- \alpha q \int_0^b W^{(q)} (z) \left\lbrace \mathbb W(c-z) + q \int_0^{b-z} \mathbb W(c-z-y) \mathbb W^{(q)}(y) \mathrm{d}y \right\rbrace \mathrm{d}z .
\end{multline*}
Again, the details are left to the reader. Then, as above, we get
\begin{multline*}
\lim_{c \to \infty} \frac{z^{(q)}(c;0) - q \int_{b}^{c} \mathbb W(c-y) z^{(q)}(y;0) \mathrm{d}y}{\mathbb{W}(c)}\\
= \left( \e[X_1] - \alpha \right) + q \int_0^b \left( \mathbb{Z}^{(q)}(y) - \alpha W^{(q)}(y) \mathbb{Z}^{(q)}(b-y) \right) \mathrm{d}y ,
\end{multline*}
and the result follows.

For part (ii), we also have that
\begin{equation*}
\mathbb{E}_x \left[\mathrm{e}^{-q\int_0^{\kappa^-_{0}} \ind_{\{U_s < b\}} \mathrm{d}s} ; \kappa_0^- = \infty \right] = \lim_{c \to \infty} \mathbb{E}_x \left[\mathrm{e}^{-q\int_0^{\kappa^+_c} \ind_{\{U_s < b\}} \mathrm{d}s} ; \kappa^+_c < \kappa_0^- \right] .
\end{equation*}
From Theorem~\ref{T:stopaboveandbelow}, we know that
$$
\e_x \left[ \mathrm{e}^{- q \int_0^{\kappa_c^+} \ind_{\{U_s < b\}} \mathrm{d}s } ; \kappa_c^+ < \kappa_0^- \right] =  \frac{w^{(q)}(x;0) - q \int_{b}^{x} \mathbb W(x-y) w^{(q)}(y;0) \mathrm{d}y}{w^{(q)}(c;0) - q \int_{b}^{c} \mathbb W(c-y) w^{(q)}(y;0) \mathrm{d}y}
$$
and, by the above, the result follows.
\end{proof}

\section{Probability of Parisian ruin}

In \cite{landriaultetal2013}, a definition of Parisian ruin is proposed. For this definition of ruin, each excursion of the surplus process $U$ below the critical level $b$ is accompanied by an independent copy of an independent (of $U$) random variable. It is called the implementation clock. If the duration of a given excursion below $b$ is less than its associated implementation clock, then ruin does not occur. Ruin occurs at the first time $\tau_q$ that an implementation clock rings before the end of its corresponding excursion below $b$.

It can be shown that, if the implementation clock is exponentially distributed with rate $q$, then the probability of Parisian ruin is given by
$$
\p_x \left(  \tau_q < \infty \right)  = 1 - \e_x \left[ \mathrm{e}^{- q \int_0^{\infty} \mathbb{I}_{\{U_s < b\}} \mathrm{d}s } \right] ,
$$
when the net profit condition is verified.

\begin{corollary}
Assume the net profit condition $\e[X_1]>\alpha$ is verified.

\begin{itemize}

\item[(i)] For  $x,b \leq c$ and $q \geq 0$,
\begin{equation*}
\e_x \left[ \mathrm{e}^{- q \int_0^{\kappa_c^+} \ind_{\{U_s < b\}} \mathrm{d}s } ; \kappa_c^+ < \infty \right] =  \frac{\mathrm{e}^{\Phi(q) (x-b)} \left( 1 - \left( q-\alpha \Phi(q) \right) \int_0^{x-b} \mathrm{e}^{-\Phi(q) y} \mathbb{W}(y) \mathrm{d}y \right) }{\mathrm{e}^{\Phi(q) (c-b)} \left( 1 - \left( q-\alpha \Phi(q) \right) \int_0^{c-b} \mathrm{e}^{-\Phi(q) y} \mathbb{W}(y) \mathrm{d}y \right) } .
\end{equation*}

\item[(ii)] For $q \geq 0$ and $x \in \reals$ we have
\begin{multline*}
\e_x \left[ \mathrm{e}^{-q \int_0^\infty \ind_{\{U_s < b\}} \mathrm{d}s} \right] \\
= \left( \frac{ \left( \e[X_1]-\alpha \right) \Phi(q)}{q - \alpha \Phi(q)} \right) \mathrm{e}^{\Phi(q) (x-b)} \left( 1 - \left( q-\alpha \Phi(q) \right) \int_0^{x-b} \mathrm{e}^{-\Phi(q) y} \mathbb{W}(y) \mathrm{d}y \right) .
\end{multline*}
Moreover, we get the distribution
\begin{multline*}
\p_x \left( \int_0^\infty \ind_{\{U_s < b\}} \mathrm{d}s \in \mathrm{d}r \right) \\
= \left( \e[X_1]-\alpha \right) \left\lbrace \mathbb{W}(x-b) \delta_0 (\mathrm{d}r) + \int_0^\infty \frac{y}{r} \mathbb{W}^\prime (y+x-b) \p \left( X_r \in \mathrm{d}y \right) \mathrm{d}r \right\rbrace .
\end{multline*}
\end{itemize}

\end{corollary}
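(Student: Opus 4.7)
The plan is to derive part (i) by specialising Theorem~\ref{T:stopaboveandbelow} to $p=0$ and letting $a\to-\infty$, obtain the Laplace transform in part (ii) by further letting $c\to\infty$ (handling $x<b$ via the strong Markov property at $\tau_b^+$), and recover the distribution by Laplace inversion via Kendall's identity for spectrally negative L\'evy processes.

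For (i), Theorem~\ref{T:stopaboveandbelow} with $p=0$ gives
$$
\e_x \left[ \mathrm{e}^{-q\int_0^{\kappa_c^+} \ind_{\{U_s<b\}}\mathrm{d}s} ; \kappa_c^+ < \kappa_a^- \right] = \frac{w^{(q)}(x;a) - q\int_b^x \mathbb{W}(x-y)w^{(q)}(y;a)\mathrm{d}y}{w^{(q)}(c;a) - q\int_b^c \mathbb{W}(c-y)w^{(q)}(y;a)\mathrm{d}y}.
$$
Under the net profit condition, $U$ drifts to $+\infty$, so as $a\to-\infty$ the event $\{\kappa_c^+<\kappa_a^-\}$ has probability tending to $1$ and the left-hand side converges to the target. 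Multiplying numerator and denominator by $\mathrm{e}^{\Phi(q)a}$ and using the asymptotics $W^{(q)}(z)\sim \mathrm{e}^{\Phi(q)z}/\psi'(\Phi(q))$ and $W^{(q)\prime}(z)\sim \Phi(q)\mathrm{e}^{\Phi(q)z}/\psi'(\Phi(q))$ at infinity, one obtains $\mathrm{e}^{\Phi(q)a} w^{(q)}(y;a) \to \tilde w(y)/\psi'(\Phi(q))$, where
$$
\tilde w(y) := \mathrm{e}^{\Phi(q)y} + \alpha\Phi(q)\ind_{\{y\geq b\}}\int_b^y \mathbb{W}^{(q)}(y-z)\mathrm{e}^{\Phi(q)z}\mathrm{d}z ;
$$
the factor $1/\psi'(\Phi(q))$ cancels in the ratio. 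What remains is the algebraic identity
$$
\tilde w(y) - q\int_b^y \mathbb{W}(y-z)\tilde w(z)\mathrm{d}z = \mathrm{e}^{\Phi(q)y}\left(1 - (q-\alpha\Phi(q))\int_0^{y-b}\mathrm{e}^{-\Phi(q)u}\mathbb{W}(u)\mathrm{d}u\right) \quad (y\geq b).
$$
Taking Laplace transforms in $\tilde y = y-b$ and using $\int_0^\infty \mathrm{e}^{-\lambda u}\mathbb{W}^{(q)}(u)\mathrm{d}u = (\psi(\lambda)-\alpha\lambda-q)^{-1}$ and $\int_0^\infty \mathrm{e}^{-\lambda u}\mathbb{W}(u)\mathrm{d}u = (\psi(\lambda)-\alpha\lambda)^{-1}$, both sides reduce to the same rational expression in $\lambda$, $\psi(\lambda)$ and $\Phi(q)$. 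This algebraic verification is the central technical step.

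For (ii), I would let $c\to\infty$ in (i). Because $\int_0^\infty \mathrm{e}^{-\Phi(q)u}\mathbb{W}(u)\mathrm{d}u = (q-\alpha\Phi(q))^{-1}$, the bracketed factor in the denominator equals $(q-\alpha\Phi(q))\int_{c-b}^\infty \mathrm{e}^{-\Phi(q)u}\mathbb{W}(u)\mathrm{d}u$; substituting $u = c-b+v$ and using $\mathbb{W}(v)\to 1/(\e[X_1]-\alpha)$ at infinity (net profit condition) shows that the denominator is asymptotic to $(q-\alpha\Phi(q))\mathrm{e}^{\Phi(q)(c-b)}/(\Phi(q)(\e[X_1]-\alpha))$, yielding the claimed formula for $x\geq b$. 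For $x<b$, the refracted process coincides with $X$ and remains in the red zone until $\tau_b^+$, so the strong Markov property at $\tau_b^+$ gives $\e_x[\mathrm{e}^{-q\int_0^\infty \ind_{\{U_s<b\}}\mathrm{d}s}] = \mathrm{e}^{-\Phi(q)(b-x)}\e_b[\mathrm{e}^{-q\int_0^\infty \ind_{\{U_s<b\}}\mathrm{d}s}]$, consistent with the stated formula (where $\int_0^{x-b}$ is read as $0$).

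For the distribution, I would invert the Laplace transform in (ii). An integration by parts gives
$$
\mathrm{e}^{\Phi(q)(x-b)}\left(\frac{\Phi(q)}{q-\alpha\Phi(q)} - \Phi(q)\int_0^{x-b}\mathrm{e}^{-\Phi(q)u}\mathbb{W}(u)\mathrm{d}u\right) = \mathbb{W}(x-b) + \int_0^\infty \mathrm{e}^{-\Phi(q)y}\mathbb{W}'(y+x-b)\mathrm{d}y,
$$
so the Laplace transform equals $(\e[X_1]-\alpha)\mathbb{W}(x-b) + (\e[X_1]-\alpha)\int_0^\infty \mathrm{e}^{-\Phi(q)y}\mathbb{W}'(y+x-b)\mathrm{d}y$. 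The constant term inverts to the atom $(\e[X_1]-\alpha)\mathbb{W}(x-b)\delta_0(\mathrm{d}r)$, which matches $\p_x(\kappa_b^-=\infty)$ by \eqref{E:refractedruinproba}. For the absolutely continuous part, Kendall's identity for SNLP in the form $\mathrm{e}^{-\Phi(q)y}\mathrm{d}y = \int_0^\infty \mathrm{e}^{-qr}(y/r)\p(X_r\in\mathrm{d}y)\mathrm{d}r$, together with Fubini, recovers the density displayed in the statement.
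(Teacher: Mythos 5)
Your proposal follows essentially the same route as the paper: for (i) you set $p=0$ in Theorem~\ref{T:stopaboveandbelow}, send $a\to-\infty$ using the exponential asymptotics of $W^{(q)}$ and $W^{(q)\prime}$, and then simplify the resulting ratio to the closed form (the paper does this via the convolution identity~\eqref{E:convolution} whereas you propose a Laplace-transform check of the same identity, which is an equivalent and perfectly standard route); for (ii) you send $c\to\infty$ using $\mathbb{W}(\infty)=(\e[X_1]-\alpha)^{-1}$ and recover the distribution via the same integration by parts and Kendall's identity as the paper. The only cosmetic differences are normalizing by $\mathrm{e}^{-\Phi(q)a}$ rather than by $W^{(q)}(m)$, and your explicit remark about $x<b$, neither of which changes the substance.
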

\begin{proof}
For part (i), we clearly have that
$$
\mathbb{E}_x \left[\mathrm{e}^{-q\int_0^{\kappa^+_c} \ind_{\{U_s < b\}} \mathrm{d}s} ; \kappa_c^+ < \infty \right] = \lim_{m \to \infty} \mathbb{E}_x \left[\mathrm{e}^{-q \int_0^{\kappa^+_c} \ind_{\{U_s < b\}} \mathrm{d}s} ; \kappa_c^+ < \kappa_{-m}^- \right] .
$$
From Theorem~\ref{T:stopaboveandbelow}, we have that
\begin{equation*}
\mathbb{E}_x \left[\mathrm{e}^{-q \int_0^{\kappa^+_c} \ind_{\{U_s < b\}} \mathrm{d}s} ; \kappa_c^+ < \kappa_{-m}^- \right]\\
=  \frac{w^{(q)}(x;-m) - q \int_{b}^{x} \mathbb W(x-y) w^{(q)}(y;-m) \mathrm{d}y}{w^{(q)}(c;-m) - q \int_{b}^{c} \mathbb W(c-y) w^{(q)}(y;-m) \mathrm{d}y} .
\end{equation*}
Since, for any $x$,
$$
\lim_{m \to \infty} \frac{W^{(q)}(x+m)}{W^{(q)}(m)} = \mathrm{e}^{\Phi(q) x}
$$
and
$$
\lim_{m \to \infty} \frac{W^{(q) \prime}(x+m)}{W^{(q)}(m)} = \Phi(q) \mathrm{e}^{\Phi(q) x} ,
$$
we get
$$
\lim_{m \to \infty} \frac{w^{(q)}(x;-m)}{W^{(q)}(m)} = \mathrm{e}^{\Phi(q) x} + \alpha \Phi(q) \int_b^x \mathrm{e}^{\Phi(q) y} \mathbb{W}^{(q)}(x-y) \mathrm{d}y .
$$
Consequently,
\begin{multline*}
\lim_{m \to \infty} \frac{w^{(q)}(x;-m) - q \int_{b}^{x} \mathbb W(x-y) w^{(q)}(y;-m) \mathrm{d}y}{W^{(q)}(m)} \\
= \mathrm{e}^{\Phi(q) x} + \alpha \Phi(q) \int_b^x \mathrm{e}^{\Phi(q) y} \mathbb{W}^{(q)}(x-y) \mathrm{d}y \\
- q \int_b^x \mathbb{W}(x-y) \left( \mathrm{e}^{\Phi(q) y} + \alpha \Phi(q) \int_b^y \mathrm{e}^{\Phi(q) z} \mathbb{W}^{(q)}(y-z) \mathrm{d}z \right) \mathrm{d}y .
\end{multline*}
Using Equation~\eqref{E:convolution} among other arguments, it can be shown that
\begin{multline*}
\int_b^x \mathbb{W}(x-y) \left( \mathrm{e}^{\Phi(q) y} + \alpha \Phi(q) \int_b^y \mathrm{e}^{\Phi(q) z} \mathbb{W}^{(q)}(y-z) \mathrm{d}z \right) \mathrm{d}y \\
= \left( 1 - \frac{\alpha \Phi(q)}{q} \right) \int_b^x \mathrm{e}^{\Phi(q) y} \mathbb{W}(x-y) \mathrm{d}y + \left( \frac{\alpha \Phi(q)}{q} \right) \int_b^x \mathrm{e}^{\Phi(q) y} \mathbb{W}^{(q)}(x-y) \mathrm{d}y ,
\end{multline*}
and the result follows.

For part (ii), since the net profit condition is assumed, we have that $\kappa_c^+ < \infty$ almost surely, for any $c$, and we have that
$$
\lim_{x \to \infty} \mathbb{W}(x) = \left( \e[X_1]-\alpha \right)^{-1} .
$$
Therefore, we can write
\begin{equation*}
\e_x \left[ \mathrm{e}^{-q \int_0^\infty \ind_{\{U_s < b\}} \mathrm{d}s} \right] = \lim_{c \to \infty} \mathbb{E}_x \left[\mathrm{e}^{-q\int_0^{\kappa^+_c} \ind_{\{U_s < b\}} \mathrm{d}s} ; \kappa_c^+ < \infty \right] .
\end{equation*}
From the result in part (i), we then have that
$$
\e_x \left[ \mathrm{e}^{-q \int_0^\infty \ind_{\{U_s < b\}} \mathrm{d}s} \right] = \frac{\mathrm{e}^{\Phi(q) (x-b)} \left( 1 - \left( q-\alpha \Phi(q) \right) \int_0^{x-b} \mathrm{e}^{-\Phi(q) y} \mathbb{W}(y) \mathrm{d}y \right)}{\lim_{c \to \infty} \mathrm{e}^{\Phi(q) (c-b)} \left( 1 - \left( q-\alpha \Phi(q) \right) \int_0^{c-b} \mathrm{e}^{-\Phi(q) y} \mathbb{W}(y) \mathrm{d}y \right) } ,
$$
if the limit exists. In fact, for $c > b$, we can show that
$$
\mathrm{e}^{\Phi(q) c} \left( 1 - \left( q-\alpha \Phi(q) \right) \int_0^{c-b} \mathrm{e}^{-\Phi(q) y} \mathbb{W}(y) \mathrm{d}y \right) = \left( q-\alpha \Phi(q) \right) \int_{-b}^\infty \mathrm{e}^{-\Phi(q) y} \mathbb{W}(y+c) \mathrm{d}y .
$$
Since
$$
\lim_{c \to \infty} \left( q-\alpha \Phi(q) \right) \frac{\int_{-b}^\infty \mathrm{e}^{-\Phi(q) y} \mathbb{W}(y+c) \mathrm{d}y}{\mathbb{W}(c)} = \left( \frac{q-\alpha \Phi(q)}{\Phi(q)} \right) \mathrm{e}^{\Phi(q) b} ,
$$
the result follows.

Now, to extract the probability distribution from the Laplace transform, we first modify the expression just obtained. Set
$$
v(x) = \e_x \left[ \mathrm{e}^{-q \int_0^\infty \ind_{\{U_s < b\}} \mathrm{d}s} \right] .
$$
After a few manipulations (definition of the $0$-scale function and integration by parts), one can write
$$
v(x) = \left( \e[X_1]-\alpha \right) \left[ \mathbb{W}(x-b) + \int_0^\infty \mathrm{e}^{-\Phi(q) y} \mathbb{W}^\prime(y+x-b) \mathrm{d}y \right] .
$$
Therefore,
$$
v(x) = \left( \e[X_1]-\alpha \right) \left[ \mathbb{W}(x-b) + \int_0^\infty \left\lbrace \int_0^\infty \mathrm{e}^{-q s} \p \left( \tau_y^+ \in \mathrm{d}s \right) \right\rbrace \mathbb{W}^\prime(y+x-b) \mathrm{d}y \right] .
$$
Note that, by Kendall's identity, we have on $(0,\infty) \times (0,\infty)$
$$
\mathrm{d}y \p \left( \tau_y^+ \in \mathrm{d}r \right) = \frac{y}{r} \p \left( X_r \in \mathrm{d}y \right) \mathrm{d}r ,
$$
and the result follows.

\end{proof}

Note that if we set $\alpha=0$ in (i) of the last corollary, we recover Corollary 2(ii) in \cite{loeffenetal2012}. Note also that (ii) of the last corollary is a slight improvement over Corollary 2 in \cite{kyprianouetal2012}: any initial level $X_0 = x \in \reals$ is considered; the proof is also different. Note finally that we have obtained a different expression for the density of the total time spent by $U$ below level $b$.

\section{Acknowledgements}

Funding in support of this work was provided by the Natural Sciences and Engineering Research Council of Canada (NSERC), the Fonds de recherche du Qu\'ebec - Nature et technologies (FRQNT) and the Insti\-tut de finance math\'ematique de Montr\'eal (IFM2).

%
%
\bibliographystyle{abbrv}
\bibliography{refracted_occ-time_19juin2013}

\end{document}